\theoremstyle{plain}
\newtheorem{proposition}{Proposition}
\newtheorem{coro}[proposition]{Corollary}
\newtheorem{lemma}[proposition]{Lemma}
\newtheorem{theoalph}{Theorem}
\theoremstyle{definition}
\newtheorem{definition}{Definition}
\theoremstyle{remark}
\newtheoremstyle{citing}% name
  {3pt}%      Space above, empty = `usual value'
  {3pt}%      Space below
  {\itshape}% Body font
  {}%         Indent amount (empty = no indent, \parindent = para indent)
  {\bfseries}% Thm head font
  {.}%        Punctuation after thm head
  {.5em}%     Space after thm head: " " = normal interword space;
\newcommand{\sL}{\mathscr{L}}
\newcommand{\cK}{\mathcal{K}}
\newcommand{\cR}{\mathcal{R}}
\newcommand{\cU}{\mathcal{U}}
\newcommand{\cV}{\mathcal{V}}
\newcommand{\cW}{\mathcal{W}}
\theoremstyle{citing}
\newcommand{\eps}{\varepsilon}
\newcommand{\Shape}{\textrm{Shape}}
\newcommand{\Back}{\textrm{Back}}
\newcommand{\hK}{\widehat{K}}
\newcommand{\hU}{\widehat{U}}
\newcommand{\hV}{\widehat{V}}
\newcommand{\tB}{\widetilde{B}}
\newcommand{\tD}{\widetilde{D}}
\newcommand{\tU}{\widetilde{U}}
\newcommand{\tV}{\widetilde{V}}
\DeclareMathOperator{\comp}{Comp}
\newcommand{\Crit}{\textrm{Crit}}
\newcommand\dist{\mathop{{\rm dist}}}
\renewcommand\int{\mathop{{\rm int}}}
\newcommand\diam{\mathop{{\rm diam}}}
\renewcommand\mod{\mathop{{\rm mod}}}
\newcommand{\C}{\mathbb{C}}
\newcommand{\R}{\mathbb{R}}
\newcommand{\Z}{\mathbb{Z}}
\newcommand{\bad}{\textrm{bad}}
\newcommand{\sep}{\textrm{sep}}
\begin{document}

\bibliographystyle{plain}

\title[Hyperbolicity Assumptions]{On non-uniform hyperbolicity assumptions in one-dimensional dynamics}
%\thanks{2000 {\it Mathematics Subject Classification:} Primary
% 37F35.}
%\footnotetext{{\it Key words and phrases:} Large derivatives,
%backward contraction,  polynomials.}}

\date{November 2, 2009}

\author{ Huaibin Li ;  Weixiao Shen}
%\footnote{ WS is supported by the ``Bai Ren Ji Hua" project of the
%CAS. }}
\thanks{{\em Mathematics Subject Classification 2010}: 37F10, 37E05}

\maketitle

\begin{abstract}
We give an essentially equivalent formulation of the backward
contracting property, defined by Juan Rivera-Letelier, in terms of
expansion along the orbits of critical values, for complex
polynomials of degree at least $2$ which are at most finitely
renormalizable and have only hyperbolic periodic points, as well as
all $C^3$ interval maps with non-flat critical points.
\end{abstract}

\section{Introduction}\label{sec:intro}
In the context of one-dimensional dynamics, widely adopted
non-uniform hyperbolicity conditions involve expansion along the
orbits of critical values, such as the Collet-Eckmann condition, the
summability conditions and the large derivatives condition,
see~\cite{CE, NS, GS, BSS, BRSS} among others. Recently,
Rivera-Letelier~\cite{R} introduced a new notion called {\em
backward contraction} which serves as a different type of
non-uniform hyperbolicity condition. This condition is more
convenient to use as it follows immediately that the first return
maps to suitably chosen small neighborhoods of critical points have
good combinatorial and geometric properties. For instance, this
notion plays an important role in the work~\cite{BRSS}.
%It turns out that many dynamical properties
%hold under this new assumption, see~\cite{R, BRSS, LS, RS}.

%It has been noticed in~\cite{R}[Theorem A]

%It has been realized that backward contraction follows from
%sufficient strong expansion along the orbits of critical values,
%see~\cite[Theorem A]{R} and ~\cite{BRSS}.
It has been realized that sufficient expansion along the orbits of
critical points often implies backward contraction,
see~\cite[Theorem A]{R} and ~\cite[Theorem 1]{BRSS}. In this paper,
we study further the relation between these two types of
non-uniformly hyperbolicity conditions. For notational definiteness,
we shall mainly work on complex maps and leave the argument for
interval maps in Appendix ~\ref{sec:interval}.

Given a complex polynomial $f$, let $\textrm{Crit}(f)$ denote the set of critical points of $f$ in
$\C$, let $J(f)$ denote the Julia set of $f$, and let
$$\Crit'(f)=\Crit(f)\cap J(f).$$  For every $z\in \Crit(f)$ and
$\delta >0$ we denote by $B(f(z), \delta)$ the Euclidean ball of
radius $\delta$ and centered at $f(z)$. Moreover, we denote by
$\widetilde{B}(z,\delta)$ the connected component of
$f^{-1}(B(f(z),\delta))$ that contains $z.$
\begin{definition}\label{def:bc}
{\rm Given a constant $r>1,$ we say that $f$ satisfies the {\em
backward contraction property with constant $r$} ($f\in BC(r)$ in
short) if there exists $\delta_{0}>0$ such that for every $c\in
\Crit'(f),$ every $0<\delta \leq \delta_{0},$ every integer $n\geq1$
and every component $W$ of $f^{-n}(\widetilde{B}(c,r\delta)),$  we
have that
$$ \dist (W,CV(f))\leq \delta \quad \Rightarrow \quad \diam(W)<\delta $$
where $CV(f)=f(\Crit(f))$. If $f\in BC(r)$ for all $r>1, $ we will
say that $f\in BC(\infty)$.}
\end{definition}

In~\cite[Theorem A]{R}, the author showed that for a complex
polynomial (or more generally a rational map), if
$$\sum_{n=0}^\infty \frac{1}{|Df^n(f(c))|}<\infty$$
holds for all $c\in \Crit'(f)$, then $f$ satisfies $BC(\infty)$.

\begin{definition}\label{def:ld}
We say that a polynomial $f$ satisfies the {\em large derivative
condition with constant $K$} ($f\in LD(K)$ in short) if there exists
a neighborhood $V$ of $\Crit'(f)$ such that for each $c\in
\Crit'(f)$ and $n\ge 1$ with $f^n(c)\in V$, we have
$$|Df^n(f(c))|\ge K.$$ If $f\in LD(K)$ for all $K>0, $ we will
say that $f\in LD(\infty)$.
\end{definition}
Obviously, given a polynomial $f$, if for every $c\in \Crit'(f),$ we
have $$\lim_{n\rightarrow \infty}|Df^n(f(c))|= \infty$$ then $f$
satisfies $LD(\infty).$

This definition was given first in~\cite{BRSS} for $C^3$ interval
maps with non-flat critical points and with all periodic points
hyperbolic repelling, where it was proved that for such maps, $f\in
LD(\infty)$ implies $f\in BC(\infty)$, where the properties $LD(K)$
and $BC(r)$ are defined as above except that we use the standard
metric on the interval and use $\Crit'(f)=\Crit(f)$. The proof uses
a special tool in real one-dimensional dynamics, namely the {\em
one-sided Koebe principle}, which has no complex analogy.

We shall prove the following in \S\ref{sec:ld2bc}:
\begin{theoalph}[Large derivative implies backward contraction]\label{thm:ld2bc} For each
integer $d\ge 2$, there exists $K_0=K_0(d)>0$ such that if $f$ is a
polynomial of degree $d$ which is at most finitely renormalizable
and has only hyperbolic periodic points and if $f$ satisfies
$LD(K_0r)$ for some $r>1$, then $f$ satisfies $BC(r)$.
\end{theoalph}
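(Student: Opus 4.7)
The plan is to convert the $LD(K_0r)$ hypothesis into the diameter bound required by $BC(r)$ by pulling back $\widetilde B(c,r\delta)$ along the orbit of a critical point producing the nearby critical value, using Koebe distortion on a bounded-degree pullback together with a local computation at the critical point $c$.

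First I would set up the geometry. Let $W$ be a component of $f^{-n}(\widetilde B(c,r\delta))$ with $\dist(W,CV(f))\le\delta$. Because $f$ is at most finitely renormalizable and all periodic points are hyperbolic, every critical point outside $J(f)$ lies in the basin of an attracting cycle, so its forward orbit keeps a definite positive distance from $J(f)$. Taking $\delta_0$ smaller than that distance, the fact that $W$ meets $J(f)$ forces the nearby critical value $f(c_*)$ to come from some $c_*\in\Crit'(f)$; pick $w_0\in W$ with $|w_0-f(c_*)|\le\delta$.

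The key geometric input is a bounded-degree statement: for $\delta_0$ small, $\deg\bigl(f^n:W\to\widetilde B(c,r\delta)\bigr)\le D_0(d)$, available for at most finitely renormalizable polynomials with only hyperbolic periodic points from the standard puzzle/Yoccoz-type technology (which is already used implicitly in Rivera-Letelier's proof of his Theorem A). Slightly enlarging $\widetilde B(c,r\delta)$ to a univalent companion and applying Koebe then yields a distortion constant $C_0=C_0(d)$ controlling $|Df^n|$ across $W$ and over $f(c_*)$ (which, for $\delta_0$ small, lies in the enlarged pullback component). I expect this bounded-degree step to be the main obstacle: checking it for pullbacks of $\widetilde B(c,r\delta)$ (not just univalent round discs) is delicate, and it is precisely where the dependence $K_0=K_0(d)$ enters the argument.

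Granted these tools, I would apply $LD(K_0r)$ to $c_*$ not at time $n$ but at time $n+1$. Distortion control shows $|f^{n+1}(c_*)-c|\le C_0\diam\widetilde B(c,r\delta)\lesssim(r\delta)^{1/m}$, where $m$ is the local degree of $f$ at $c$, which for small $\delta_0$ places $f^{n+1}(c_*)$ in $V$, so the hypothesis gives $|Df^{n+1}(f(c_*))|\ge K_0r$. Splitting off the last derivative via the chain rule and using $|Df(z)|\lesssim|z-c|^{m-1}$ near the critical point $c$, I get
$|Df^n(f(c_*))|\gtrsim K_0r^{1/m}/\delta^{(m-1)/m}$. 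Finally, for any $w,w'\in W$ the images $f^n(w),f^n(w')\in\widetilde B(c,r\delta)$ satisfy $|f^n(w)-f^n(w')|\le C_1(r\delta)^{1/m}$, while bounded distortion yields $|f^n(w)-f^n(w')|\ge C_0^{-1}|Df^n(f(c_*))|\cdot|w-w'|$. Combining produces $|w-w'|\le C_2\delta/K_0$ with $C_2=C_2(d)$, and choosing $K_0>C_2$ forces $\diam W<\delta$, which is exactly the conclusion of $BC(r)$.
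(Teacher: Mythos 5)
Your proposal has a genuine gap, and it is precisely the step you flag as ``the main obstacle'': the claim that $\deg\bigl(f^n:W\to\widetilde B(c,r\delta)\bigr)\le D_0(d)$ for an \emph{arbitrary} pullback $W$ of $\widetilde B(c,r\delta)$ near a critical value. This bounded-degree statement is false in general, and it is not something that ``standard puzzle/Yoccoz-type technology'' hands you for free. The intermediate iterates $f^j(W)$, $1\le j< n$, may visit critical points any number of times; the only way to bound the number of such visits is to already know that the $f^j(W)$ are small, i.e.\ to already know something like backward contraction. So your argument is circular: to get bounded degree you need $W$ to be small, and to show $W$ is small you invoke Koebe distortion, which needs bounded degree. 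The subsequent steps (placing $f^{n+1}(c_*)$ in $V$, the two-sided distortion estimate $|f^n(w)-f^n(w')|\ge C_0^{-1}|Df^n(f(c_*))|\,|w-w'|$) all rest on this same unavailable distortion control.

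The paper sidesteps this circularity entirely by working with Rivera-Letelier's \emph{univalent pull back condition}: one only needs to control pullbacks along orbits that \emph{avoid} the $\delta$-critical neighborhoods at all intermediate times, so the troublesome unbounded-degree configurations simply never appear. Concretely, the paper uses the a priori complex bounds (Proposition~\ref{prop:aprioribounds}) to build, around each $c_0\in\Crit'(f)$, a nested sequence of nice topological disks with uniformly bounded shape and geometrically comparable scales (Proposition~\ref{prop:nicenets}); in that construction the $LD(K)$ hypothesis enters through the Koebe-variation Lemma~\ref{lem:koebevariation} applied to \emph{first-entry} maps, whose degree is bounded by the combinatorial Lemma~\ref{lem:admdeg}. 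This yields exactly the estimate $\diam(f(\sL_c(V_k)))\le C_*K^{-1}\diam(f(V_k))$ needed to verify the univalent pull back condition with constant $rr_0$, and then the cited bootstrapping result \cite[Proposition 6.1]{R} (Lemma~\ref{lem:up2bc} here) converts this into $BC(r)$. That bootstrapping lemma is doing the real work that your bounded-degree claim cannot do; any correct proof of Theorem~\ref{thm:ld2bc} needs either it or a substitute for it, together with the puzzle-piece geometry that your proposal never invokes.
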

Recall that a {\em renormalization} of $f$ is a map $f^s: U\to V$,
where $s$ is a positive integer and $V\Supset U$ are Jordan disks,
such that the following hold:
\begin{itemize}
\item $f^s: U\to V$ is proper;
\item $U$ contains a critical point in $J(f)$;
\item the following set (called the filled Julia set of $f^s: U\to V$) is connected:
$$\{z\in U: f^{sn}(z)\in U\mbox{ for all } n=1,2,\ldots\};$$
\item for each $c\in\Crit(f)$, there exists at most one $j\in \{0,1,\ldots, s-1\}$ with $c\in f^j(U)$;
\item $U\not\supset J(f)$.
\end{itemize}
We say that $f$ is infinitely renormalizable if there exists a sequence of renormalizations $f^{s_k}: U_k\to V_k$ such that $s_k\to\infty$.

%\marginpar{think}

As a consequence of the Schwarz lemma, we shall also prove the
following converse statement in \S\ref{sec:bc2ld}:
\begin{theoalph}[Backward contraction implies large
derivative]\label{thm:bc2ld} Let $f$ be a polynomial of degree at
least $2$. There exists a constant $r_0$ depending on the maximal
critical order of $f$ such that if $f$ satisfies $BC(Kr_0)$ for some
$K>1$, then $f$ satisfies $LD(K)$.
\end{theoalph}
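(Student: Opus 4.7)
The plan is to derive the large derivative condition from the backward contraction property by a pullback argument combined with Koebe's distortion theorem (a consequence of the Schwarz lemma). Fix $K > 1$ and suppose $f \in BC(Kr_0)$ with constant $\delta_0$. Write $\ell_{\max}$ for the maximal critical order of $f$ and $\ell_c$ for the local degree at $c$; the constant $r_0 = r_0(\ell_{\max})$ will be fixed at the end. Choose the neighborhood $V$ of $\Crit'(f)$ small enough that its component around each $c' \in \Crit'(f)$ is contained in $\widetilde{B}(c', Kr_0\delta_0/2^{\ell_{c'}+1})$.

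Given $c \in \Crit'(f)$, $n \geq 2$, and $f^n(c) \in V$ with $c'$ the associated critical point, set
\[
\delta = \frac{2^{\ell_{c'}+1}}{Kr_0}\,|f^{n+1}(c) - f(c')|.
\]
Then $\delta \leq \delta_0$, and the local form $f(z) - f(c') = a(z-c')^{\ell_{c'}}(1 + O(z-c'))$ places $f^n(c)$ in the annular region $\widetilde{B}(c', Kr_0\delta) \setminus \widetilde{B}(c', Kr_0\delta/2^{\ell_{c'}+1})$. In particular both $|f^n(c) - c'|$ and $\dist(f^n(c), \partial\widetilde{B}(c', Kr_0\delta))$ are of order $(Kr_0\delta)^{1/\ell_{c'}}$, with constants depending only on $\ell_{c'}$. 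Let $W$ be the connected component of $f^{-(n-1)}(\widetilde{B}(c', Kr_0\delta))$ containing $f(c)$; since $f(c) \in W \cap CV(f)$, the property $BC(Kr_0)$ yields $\diam(W) \leq \delta$.

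Next, let $g$ be the univalent branch of $(f^{n-1})^{-1}$ at $f^n(c)$ with $g(f^n(c)) = f(c)$, and let $\rho$ denote the radius of the largest round disk $B(f^n(c), \rho) \subset \widetilde{B}(c', Kr_0\delta)$ on which $g$ extends univalently. Koebe's $1/4$-theorem applied to $g$ gives $g(B(f^n(c), \rho)) \supset B(f(c), \rho/(4|Df^{n-1}(f(c))|)) \subset W$, so $|Df^{n-1}(f(c))| \geq \rho/(4\delta)$. Combined with $|Df(f^n(c))| \asymp (Kr_0\delta)^{(\ell_{c'}-1)/\ell_{c'}}$ and the intended estimate $\rho \gtrsim (Kr_0\delta)^{1/\ell_{c'}}$, this yields
\[
|Df^n(f(c))| = |Df^{n-1}(f(c))| \cdot |Df(f^n(c))| \gtrsim Kr_0,
\]
with implicit constant depending only on $\ell_{c'}$. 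Taking $r_0$ large enough in terms of $\ell_{\max}$ produces $|Df^n(f(c))| \geq K$, completing the argument for $n \geq 2$; the case $n = 1$ is handled separately by further shrinking $V$ so that $f(c) \in V$ forces $|f(c) - c'|$ to have the scale making $|f'(f(c))| = |Df(f(c))|$ at least $K$ through the same local form.

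The main obstacle will be justifying the lower bound $\rho \gtrsim (Kr_0\delta)^{1/\ell_{c'}}$. The contribution from $\partial\widetilde{B}(c', Kr_0\delta)$ is of the correct order by construction, but $\rho$ may also be bounded by the nearest critical value of $f^{n-1}|_W$ inside $\widetilde{B}(c', Kr_0\delta)$; such a critical value arises from a critical point $z_0 \in W$, and BC forces $|z_0 - f(c)| \leq \delta$. My plan is to exclude obstructing critical values by a second use of $BC(Kr_0)$ at the inner scale $\widetilde{B}(c', Kr_0\delta/2^{\ell_{c'}+1})$: any such critical value too close to $f^n(c)$ would produce an additional pullback component whose geometry contradicts the backward contraction estimate at that scale, provided $r_0$ is chosen large enough in terms of $\ell_{\max}$.
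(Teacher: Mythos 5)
Your setup is promising in spirit --- the use of $BC$ to bound $\diam(W)$ and the Koebe/Schwarz step to convert this into a derivative estimate are indeed the right tools, and the exponent arithmetic in $\rho \gtrsim (Kr_0\delta)^{1/\ell_{c'}}$ correctly reproduces the expected factor $Kr_0$. But the gap you flag at the end is not a technicality to be patched; it is the crux of the theorem, and your ``plan'' to close it does not hold up. You are attempting a \emph{single-shot} pullback of $\tB(c',Kr_0\delta)$ over the full orbit $f(c),\dots,f^n(c)$, and you need the inverse branch of $f^{n-1}$ at $f^n(c)$ to be univalent on a disk of radius comparable to $\diam(\tB(c',Kr_0\delta))$. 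Nothing forces this: the intermediate pullbacks $W_j = f^j(W)$, $1\le j<n-1$, can pass through critical points whenever the orbit of $c$ enters deep neighborhoods of other critical points at intermediate times, and the estimate $|z_0-f(c)|\le\delta$ for a critical point $z_0$ of $f^{n-1}$ in $W$ gives no lower bound whatsoever on $|f^{n-1}(z_0)-f^n(c)|$ (the forward image can collapse). Your proposed ``second use of $BC$ at the inner scale'' does not produce a contradiction as stated --- $BC$ controls diameters of pullbacks, not distances of critical values from $f^n(c)$.

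The paper avoids this problem entirely by never pulling back over the whole orbit at once. It introduces the dyadic scales $V_k = \bigcup_c \tB(c,2^{-k}\delta_0)$ and decomposes $\{1,\dots,S\}$ into maximal blocks $(S_i,S_{i+1}]$ according to the \emph{deepest} scale visited in the tail of the orbit. The point of taking the deepest scale first is that within each block the orbit never enters $V_{n_i+1}$; combined with $BC$ applied at the slightly smaller radius $\delta_{n_i-1}/r$, this forces every intermediate pullback $U_j$ in the block to be disjoint from $\Crit'(f)$ (otherwise $f^j(c)\in V_{n_i+1}$, contradiction). Univalence is therefore automatic block by block. The first (deepest) block then produces a factor $\kappa r\ge K$ via the Schwarz lemma (Lemma~\ref{lem:crtret}), and each subsequent block contributes a factor $\ge 1$ (Lemma~\ref{lem:shiftret}), so the product over blocks gives $|Df^S(f(c))|\ge K$. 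In short: the missing idea is the scale-based orbit decomposition, not a cleverer exclusion of critical values inside a single long pullback. If you want to salvage your approach, you should restrict the pullback to the segment after the last deepest return (exactly what the paper's $S_1$ does), prove that this segment's pullbacks are critical-point free using $BC$ at a shrunken radius, and then handle the remaining portion of the orbit by a separate ``derivative $\ge 1$'' estimate.
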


Combining these two theorems, we obtain that
\begin{coro} Let $f$ be a polynomial which is at most
finitely renormalizable and has only hyperbolic periodic points.
Then $f$ satisfies $BC(\infty)$ if and only if $f$ satisfies
$LD(\infty)$.
\end{coro}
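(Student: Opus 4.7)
The plan is to deduce the corollary directly by feeding the hypotheses into the two theorems just stated. The only work is to chase quantifiers on the constants $K$, $r$, $K_0$, and $r_0$, so I will organize the argument as two short paragraphs, one for each implication.

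For the direction $LD(\infty)\Rightarrow BC(\infty)$, I would fix an arbitrary $r>1$ and try to show $f\in BC(r)$. Theorem~A supplies a constant $K_0=K_0(d)$ depending only on the degree $d$ of $f$, and asserts that $f\in LD(K_0r)\Rightarrow f\in BC(r)$ under the standing hypotheses (finitely renormalizable, hyperbolic periodic points). Since $LD(\infty)$ just means $f\in LD(K)$ for every $K>0$, in particular $f\in LD(K_0 r)$, so $f\in BC(r)$. As $r>1$ was arbitrary, $f\in BC(\infty)$.

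For the converse $BC(\infty)\Rightarrow LD(\infty)$, I would fix an arbitrary $K>0$ and aim at $f\in LD(K)$. Here I would use Theorem~B, which furnishes a constant $r_0$ depending only on the maximal critical order of $f$, and gives $f\in BC(Kr_0)\Rightarrow f\in LD(K)$ provided $K>1$. The restriction $K>1$ causes no trouble, since $LD(K')\subset LD(K)$ whenever $K'\ge K$, so it suffices to prove $f\in LD(K)$ for all $K>1$. For such $K$ we have $Kr_0>1$, so $BC(\infty)$ gives $f\in BC(Kr_0)$ and Theorem~B yields $f\in LD(K)$. Hence $f\in LD(\infty)$.

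I do not expect any genuine obstacle here; the only mildly delicate point is that the monotonicity $LD(K')\subseteq LD(K)$ for $K'\ge K$ and $BC(r')\subseteq BC(r)$ for $r'\ge r$ (both immediate from the definitions, since enlarging the relevant constant is a strengthening of the hypothesis) must be invoked in order to reach the range $K>1$, $r>1$ required by Theorems~A and~B. The content of the corollary is therefore entirely carried by those two theorems, and the proof reduces to this short bookkeeping argument.
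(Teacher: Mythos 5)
Your proof is correct and takes the same route as the paper, which simply remarks that the corollary is obtained "combining these two theorems"; your quantifier-chasing, including the harmless observation that it suffices to treat $K>1$ via the monotonicity $LD(K')\subseteq LD(K)$ for $K'\ge K$, is exactly the bookkeeping the authors leave implicit.
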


Our proof of the Theorem~\ref{thm:ld2bc} is based on the following
complex bounds established in~\cite{KS}, which depends heavily on
the recent analytic result~\cite{KL} and the enhanced nest
construction~\cite{KSS}. See also~\cite{QY} for the case of Cantor
Julia sets. Since the precise form is not stated explicitly
in~\cite{KS}, we include a proof of the proposition in the Appendix
~\ref{sec:bounds} for completeness.

\begin{proposition}[Complex bounds] \label{prop:aprioribounds}
Assume that $f$ is a complex polynomial of degree $d\ge 2$ which is
at most finitely renormalizable and has only hyperbolic periodic
points. Then there exists $\rho_0=\rho_0(d)>0 $ such that for each $c\in
\Crit'(f),$ there exists an arbitrarily small $\rho_0$-nice
topological disk which contains $c$.
\end{proposition}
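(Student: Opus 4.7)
The plan is to combine the enhanced nest construction of~\cite{KSS} with the a priori complex bounds of~\cite{KL}, in the quantitative form packaged by~\cite{KS}. The first step is to construct an initial Yoccoz puzzle for $f$. Since all periodic points are hyperbolic, there is a repelling periodic orbit $\alpha$ at which finitely many external rays land; cutting the plane by these rays together with an equipotential produces a finite collection of Jordan disks whose preimages under $f^n$ form a nice puzzle at depth $n$. Using the hypothesis that $f$ is at most finitely renormalizable, after replacing $f$ by a suitable renormalization around each $c\in\Crit'(f)$ whose combinatorics is renormalizable, one may assume the puzzle pieces around each critical point shrink to that critical point, at least for recurrent critical points.

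The second step is to apply the enhanced nest construction of~\cite{KSS} at each recurrent $c\in \Crit'(f)$. This produces a shrinking sequence of nice topological disks $K_0(c)\supset K_1(c)\supset\cdots$ containing $c$ with $\bigcap_n K_n(c)=\{c\}$. The defining combinatorial feature of the enhanced nest is that the pullback realizing $K_{n+1}(c)$ from a carefully chosen first return image of $K_n(c)$ has degree bounded solely in terms of the number of critical points, and hence in terms of $d$. Critical points which are not recurrent can be handled separately by a standard argument, using that their $\omega$-limit sets are hyperbolic sets, so one may simply take small Euclidean disks as the desired nice neighborhoods.

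The analytic heart of the proof is then to invoke the Kahn-Lyubich covering lemma of~\cite{KL}: together with the bounded-degree property built into the enhanced nest, it yields a uniform lower bound $\rho_0=\rho_0(d)>0$ for $\mod(K_n(c)\setminus \overline{W})$ for every component $W$ of $f^{-k}(K_n(c))$ that is compactly contained in $K_n(c)$. This is exactly the $\rho_0$-niceness of $K_n(c)$, and taking $n$ large one obtains arbitrarily small $\rho_0$-nice disks containing $c$. I expect the main obstacle to be ensuring that this modulus bound depends only on $d$ and not on further combinatorial data of $f$; this is precisely the content of the covering lemma, and is the reason why the bounded-degree property had to be engineered into the enhanced nest at the combinatorial step. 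A secondary technical point is that~\cite{KS} does not state the proposition in this exact quantitative form, so one must trace through the construction there, noting which constants depend only on $d$; for this reason the authors include a self-contained proof in Appendix~\ref{sec:bounds}.
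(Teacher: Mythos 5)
Your high-level strategy — assemble a Yoccoz puzzle, run the enhanced nest construction, and feed the bounded-degree combinatorics into the Kahn--Lyubich covering lemma — is the right general philosophy, and it is indeed the philosophy of~\cite{KS}. But the proposal glosses over two points where the paper's Appendix~\ref{sec:bounds} has to do real work, and one of them is a genuine gap that would keep the argument from closing.

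First, what the~\cite{KS} machinery actually delivers for a persistently recurrent $c$ is only that the puzzle piece is \emph{essentially} $\rho_1$-nice: one gets a lower modulus bound $\mod(V;U)\ge\rho_1$ only for return domains $U$ of $V$ that intersect $\textrm{orb}(c)$. The conclusion of Proposition~\ref{prop:aprioribounds} asks for $\rho_0$-niceness with respect to \emph{all} return domains. Passing from the essentially-nice statement to the genuinely-nice statement is not automatic; the paper does this via the admissible-pair formalism (Lemmas~\ref{lem:adm} and~\ref{lem:admtran}), the separate treatment of reluctantly recurrent critical points through~\cite[Lemma 6.5]{KSS} in Lemma~\ref{lem:reluctant}, and then an inductive upgrade over a chain $\emptyset=\Omega_0\subsetneq\Omega_1\subsetneq\cdots\subsetneq\Omega_b=\Crit'(f)$, spreading good moduli from each class $[c]$ outward across $\Back(c)$ using the fact that the first-entry maps involved have universally bounded degree (Lemma~\ref{lem:admdeg}). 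You also invoke the enhanced nest uniformly for recurrent critical points, but the enhanced nest / Kahn--Lyubich route is tailored to the persistently recurrent case; the reluctantly recurrent case needs the separate ``bounded degree with long travel time'' argument of Lemma~\ref{lem:reluctant}. Your proposal does not address this dichotomy, nor the essentially-nice/nice discrepancy, and these are exactly the reasons the appendix is not a one-line citation of~\cite{KS}.

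Second, the puzzle construction in the proposal is oversimplified. Since $f$ is only assumed to have hyperbolic periodic points, $f$ may have attracting cycles and hence bounded Fatou components, so an external-ray puzzle alone does not separate critical points. The paper builds $\Theta$ from equipotentials, external rays landing on carefully chosen separable periodic points not in $P_{\bad}$, and internal rays and internal equipotentials inside bounded Fatou components (Observations~1 and~2); this is where the hyperbolicity of periodic points is actually used. Moreover, when the initial puzzle fails to shrink around some critical point, the paper does not pass to a renormalization of $f$: it detects a renormalization, extracts from it a fresh separable periodic orbit, enlarges the cutting set $\Theta$, and iterates, using ``at most finitely renormalizable'' to guarantee termination. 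Replacing $f$ by a polynomial-like renormalization, as you suggest, would change the class of objects the proposition is stated for (a renormalization of a polynomial is not a polynomial), and it would also create bookkeeping issues because a single critical point can be involved in several different renormalizations. The paper's add-more-rays approach avoids both problems.
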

Recall that an open set $  V \subset \mathbb{C}$ is called {\em
nice} if $f^{n}(\partial V) \cap V = \emptyset $ for all $n \geq 0.$
We say that $V$ is {\em $\rho$-nice} if for each return domain $U$
of $V$ , there is an annulus $A\subset V\setminus \overline{U}$ such
that $U$ is contained in the bounded component of $\C\setminus A$
and such that $\mod(A)\ge \rho.$

In appendix \ref{sec:interval}, we shall prove similar results for
interval maps.

\noindent {\bf  Terminology and notation:}\\
\indent A {\em topological disk} means a simply connected domain in
$\mathbb{C}.$ An annulus $A$ is a doubly connected domain in $\C$,
and the modulus $\mod(A)$ of $A$ is defined to be $\log R/r$, where
$A$ is conformal isomrophic to the round annulus $\{r<|z|<R\}$.
Given an open set $V$ and a set $E$ with $\overline{E}\subset V$,
let
$$\mod(V; E)=\sup_A \mod (A),$$
where the supremum is taken over all annuli $A$ with the property
that $A\subset V\setminus \overline{E}$ and $E$ is contained in the
bounded component of $\C\setminus A$.

Given a nice set $V ,$ let $$D(V)= \{z\in \mathbb{C}: f^{n}(z) \in V
\mbox{ for some }n\geq 1 \}.$$

The {\em first entry map } $R_{V}: D(V)\rightarrow V$ is defined as
$z \mapsto f^{k(z)}(z),$ where $k(z),$ called the entry time of $z$
into $V,$ is the minimal positive integer such that $f^{k(z)}(z) \in
V.$ Since $f$ is continuous and $V$ is nice, $k(z)$ is constant in
any component of $D(V).$ A component of $D(V)$ is called an {\em
entry domain.} The map $R_{V}\mid _{D(V)\cap V} $ is called the {\em
first return map} of $V,$  and a component of $D(V)\cap V$ is called
a {\em return domain.} For any $x\in D(V)$, let $\sL_x(V)$ denote
the entry domain which contains $x.$ Moreover, for $x\in D(V)\cup
V,$ let $\widehat{\sL}_x(V)=\sL_x(V)$ if $x\in D(V)\setminus V$, and
let $\widehat{\sL}_x(V)= V$ if $x\in V.$

{\bf Acknowledgement.} We would like to thank Juan Rivera-Letelier
for reading carefully a first version of the manuscript and several
valuable discussions.

\section{Large derivative implies backward contraction}
\label{sec:ld2bc} The goal of this section is to prove
Theorem~\ref{thm:ld2bc}. So let us fix an integer $d\ge 2$ and let
$f$ denote a polynomial of degree $d$. We assume throughout this
section that $f$ is at most finitely renormalizable and has only
hyperbolic periodic points. For each critical point $c$, let
$\ell_c$ denote the order of $c$, and let
$\ell_{\max}(f)=\max\{\ell_c:c\in\Crit'(f)\}.$

In the following we shall not state explicitly dependence of
constants on the degree $d$. So a constant depending only on $d$
will be called {\em universal}.

\subsection{Preparation}
We shall use the following variation of the Koebe principle.

\begin{lemma}\label{lem:koebevariation}
For any $\rho>0$ and $N\ge 1$, there exists $A_0=A_0(\rho, N)>0$
such that the following holds. Let $V\supset D, U\supset E$ be
bounded topological disks and let $s$ be a positive integer with the
following properties:
\begin{itemize}
 \item $\mod(V; D)\ge \rho$;
 \item $U$ is a component of $f^{-s}(V)$;
 \item the degree of $f^s: U\to V$ is at most $N$.
 \item $E$ is a connected component of $f^{-s}(D)$.
\end{itemize}
Then for any $x\in E$,
$$|Df^s(f(x))|\le A_0\frac{\diam (f(D))}{\diam (f(E))},$$
provided that $\diam (D)$ is small enough.
\end{lemma}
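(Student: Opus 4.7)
The plan is to factor $f^s$ and apply a generalized Koebe distortion bound to an intermediate composition. Set $g := f^{s-1}|_{f(U)}\colon f(U)\to V$, so that $f^s|_U = g\circ f|_U$. Since $f^s\colon U\to V$ is proper of degree at most $N$, the map $g$ is proper with $\deg(g)=\deg(f^s|_U)/\deg(f|_U)\le N$. Let $E'$ be the connected component of $g^{-1}(D)$ containing $f(E)$; in particular $\diam(E')\ge\diam(f(E))$.

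The key step is a generalized Koebe inequality for $g$. The hypothesis $\mod(V;D)\ge \rho$, together with the pullback-of-modulus inequality for proper maps of bounded degree, yields $\mod(f(U); E')\ge \rho/N$. A branched-cover lifting of $g$ reduces the situation to the classical univalent Koebe distortion theorem and produces a constant $C_1=C_1(\rho,N)$ such that
$$|Dg(y)|\le C_1\,\frac{\diam(D)}{\diam(E')}\qquad\text{for every }y\in E'.$$
The possible presence of critical points of $g$ inside $E'$ is harmless for this upper bound: at such points $|Dg|$ vanishes, so the inequality is trivial.

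The chain rule at $y=f(x)\in E'$ gives
$$Df^s(f(x))=Df(f^s(x))\cdot Dg(f(x)),$$
and combining with the Koebe bound together with $\diam(E')\ge\diam(f(E))$,
$$|Df^s(f(x))|\le C_1\,|Df(f^s(x))|\,\frac{\diam(D)}{\diam(f(E))}.$$
To finish, I use the hypothesis that $\diam(D)$ is small enough: because $f$ is a polynomial, on a sufficiently small disk $D$ it is close to its local normal form near any point of $D$---affine if $D\cap\Crit(f)=\emptyset$, and otherwise $w\mapsto w^\ell$ near a critical point of order $\ell\le\ell_{\max}(f)$. A direct local computation in either case produces a universal $C_2$ with $|Df(y)|\cdot\diam(D)\le C_2\,\diam(f(D))$ for every $y\in D$; applied to $y=f^s(x)\in D$, this yields the conclusion with $A_0=C_1 C_2$.

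The main obstacle is the generalized Koebe step: one must pass from the univalent Koebe theorem to a proper map of degree up to $N$. The standard modulus-pullback-plus-branched-cover trick handles this, and the crucial observation that the upper bound survives at critical points of $g$ is immediate from $|Dg|=0$ there.
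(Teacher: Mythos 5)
Your proposal is correct and follows essentially the same route as the paper. The ``generalized Koebe inequality'' you invoke for $g = f^{s-1}|_{f(U)}$ --- namely $|Dg(y)| \le C_1(\rho,N)\,\diam(D)/\diam(E')$ --- is precisely what the paper proves inline: after normalizing $\mod(V;D)=\rho<1$ by restriction, it conjugates $g$ by two Riemann maps (centered at $f(x)$ and $f^s(x)$), applies the Schwarz lemma to the resulting proper self-map $F$ of $\mathbb{D}$ fixing $0$, and uses Koebe distortion on the two Riemann maps to convert the modulus bounds $\mod(f(U);f(E))\ge\rho/N$ and $\mod(V;D)=\rho$ into the diameter ratio; your final ``small disk'' step $|Df(f^s(x))|\le C\,\diam(f(D))/\diam(D)$ and the chain rule $Df^s(f(x)) = Df(f^s(x))\cdot Dg(f(x))$ also coincide with the paper's. (Two small points worth making explicit: one does need the WLOG reduction $\mod(V;D)=\rho$ before applying Koebe to $\psi^{-1}$, which the paper does and you leave implicit; and in fact $E' = f(E)$ exactly, since $E$ is a component of $f^{-1}(E')$ and the restriction of a proper map to a preimage component is proper, so the inequality $\diam(E')\ge\diam(f(E))$ is an equality.)
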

\begin{proof}
Certainly we only need to prove the lemma in the case $\rho<1$. By
considering a suitable restriction of the map $f^s: U\to V$ we may
assume that $\mod(V; D)=\rho$.

Let $\varphi: f(U)\to \mathbb{D}$, $\psi: V\to \mathbb{D}$ be
Riemann mappings with $\varphi(f(x))=0$ and $\psi(f^s(x))=0$, where
$\mathbb{D}$ denotes the unit disk in $\C$. Then $F:=\psi\circ
f^{s-1}\circ \varphi^{-1}$ is a holomorphic map from $\mathbb{D}$
into itself and $F(0)=0$. Thus by the Schwarz lemma, $|F'(0)|\le 1$.
Since $F$ is proper and $\deg (F)\le N$, we have
$$\mod(f(U); f(E))\ge \frac{1}{\deg (F)}\mod(V; D)\ge \frac{\rho}{N}.$$
By the Koebe distortion theorem, it follows that
$$|\varphi'(f(x))|\le C_1 \diam (f(E))^{-1},$$ where $C_1=C_1(\rho, N)$
is a constant. Since $$\mod(\mathbb{D}; \psi (D))=\mod(V;
D)=\rho<1,$$ $\diam (\psi(D))\ge e^{-1}$.
Again by the Koebe distortion theorem, we obtain that
$$|(\psi^{-1})'(0)|\le C_2\diam (D),$$
where $C_2=C_2(\rho)$.
Finally, provided that $\diam (D)$ is small enough, we have
$$|Df(f^s(x))|\le C_3\diam (f(D))/\diam (D),$$
where $C_3>0$ is a constant depending only on the degree of $f$.
Combining all these estimates, we obtain
\begin{align*}
|Df^s(f(x))|& =|Df(f^s(x))||F'(0)| |\varphi'(f(x))|
|(\psi^{-1})'(0)| \\
& \le C_1C_2C_3\frac{\diam (f(D))}{\diam (f(E))}.
\end{align*}
Thus the lemma holds with $A_0=C_1C_2C_3$.
\end{proof}

Given an bounded open set $\Omega\subset\C$ and $z\in \Omega$, let
$$IR(\Omega, z)=\inf_{w\in\partial\Omega} d(z, w), \,\,\, OR(\Omega, z)
=\sup_{w\in \partial\Omega} d(z, w)$$ and
$$\Shape (\Omega, z)= \frac{OR(\Omega, z)}{IR(\Omega, z)}.$$

We shall use the following procedure to construct nice sets with
bounded shape. Given a nice topological disk $V$, a point $z_0\in V$
and a constant $\lambda>0$, let $B_V(z_0,\lambda)$ be the hyperbolic
ball centered at $z_0$ and of radius $\lambda$ (in the hyperbolic
Riemann surface $V$), let $V_*[z_0,\lambda]$ be the union of
$B_V(z_0,\lambda)$ and all the return domains of $V$ that intersect
this set, and let $V[z_0,\lambda]$ be the filling of
$V_*[z_0,\lambda]$, i.e. the union of $V_*[z_0,\lambda]$ and the
bounded components of $\C\setminus V_*[z_0,\lambda]$. Clearly, for
each $n\ge 1$,
$$f^n(\partial V[z_0,\lambda])\cap V=\emptyset.$$

\begin{lemma}\label{lem:filling}
For each $\rho>0$ and $\lambda>0$ there exist $M>1 $ and $\rho'>0$
such that if $V$ is a $\rho$-nice topological disk, then for any
$z_0\in V$, $V[z_0,\lambda]$ is $\rho'$-nice and $\Shape
(V[z_0,\lambda], z_0)\le M$.
\end{lemma}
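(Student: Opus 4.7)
The plan is to establish the two conclusions separately: the shape bound via Koebe distortion applied to a Riemann map of $V$, and the $\rho'$-nice property via a pullback argument exploiting the strong niceness $f^n(\partial V[z_0,\lambda])\cap V=\emptyset$ already observed in the text.

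First I would uniformize $V$ by a Riemann map $\varphi:\mathbb{D}\to V$ with $\varphi(0)=z_0$, so that $B_V(z_0,\lambda)=\varphi(\{|w|<\tanh(\lambda/2)\})$. A standard Gr\"otzsch-Teichm\"uller estimate converts the $\rho$-niceness of $V$ into a hyperbolic-diameter bound $c_0=c_0(\rho)$ for every return domain of $V$: any topological disk in $V$ surrounded by a modulus-$\rho$ annulus has hyperbolic diameter at most $c_0$ in $V$. Consequently every return domain meeting $B_V(z_0,\lambda)$ already lies in $B_V(z_0,\lambda+c_0)$, giving
\[
V_*[z_0,\lambda]\subset B_V(z_0,\lambda+c_0).
\]
Because hyperbolic balls are simply connected in $\C$, filling is monotone and passes this inclusion to $V[z_0,\lambda]\subset B_V(z_0,\lambda+c_0)$. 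Sandwiching $V[z_0,\lambda]$ between the two concentric hyperbolic balls $B_V(z_0,\lambda)$ and $B_V(z_0,\lambda+c_0)$, whose shapes around $z_0$ are both controlled by Koebe applied to $\varphi$, then yields $\Shape(V[z_0,\lambda],z_0)\le M$ with $M=M(\rho,\lambda)$.

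For $\rho'$-niceness I would first record that $V[z_0,\lambda]$ is a simply connected topological disk (filling of a connected open set) compactly contained in $V$, and that its boundary is strongly nice in the sense of the text. Fix a return domain $U''$ of $V[z_0,\lambda]$ with return time $s$ and introduce the intermediate hyperbolic disk $V':=B_V(z_0,\lambda+c_0+1)$, which satisfies $V[z_0,\lambda]\Subset V'\Subset V$ and, via the conformal model, $\mod(V';V[z_0,\lambda])\ge m_1(\rho,\lambda)>0$. Let $\hat U$ be the component of $f^{-s}(V')$ containing $U''$, so that $f^s:\hat U\to V'$ is a proper holomorphic map. The inclusion $\hat U\subset V[z_0,\lambda]$ is forced by the strong niceness: if not, connectedness would produce $z\in\hat U\cap\partial V[z_0,\lambda]$ with $f^s(z)\in V'\subset V$, contradicting $f^s(\partial V[z_0,\lambda])\cap V=\emptyset$. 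Pulling back the annulus $V'\setminus\overline{V[z_0,\lambda]}$ through $f^s$ then produces an annulus inside $V[z_0,\lambda]\setminus\overline{U''}$ around $U''$ of modulus at least $m_1/D$, where $D$ is the degree of $f^s:\hat U\to V'$.

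The hard part will be the degree bound $D\le N(\rho,\lambda,d)$. I expect to obtain it from the standard bounded-degree property of pullbacks of compactly contained sub-disks of a $\rho$-nice set: each critical point of $f$ encountered along the first $s$ iterates of $\hat U$ contributes to the branching of $f^s:\hat U\to V'$ via Riemann-Hurwitz, and each such occurrence forces the orbit to traverse a modulus-$\rho$ annulus around some return domain of $V$, giving a uniform bound $N=N(\rho,\lambda,d)$ on the total branching before $f^s$ reaches $V'$. Once $D\le N$ is in hand, setting $\rho':=m_1/N$ completes the proof.
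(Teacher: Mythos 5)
Your shape-bound argument matches the paper's: both use that $\rho$-niceness forces the return domains of $V$ to have uniformly bounded hyperbolic diameter, so $V[z_0,\lambda]$ has bounded hyperbolic diameter in $V$, and then Riemann mapping plus Koebe give the shape bound around $z_0$. The observation $\hat U\subset V[z_0,\lambda]$ via the strong niceness of $\partial V[z_0,\lambda]$ is also correct. The problem is in the second half, at exactly the place you flagged as ``the hard part.''

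The degree bound for $f^s\colon\hat U\to V'$ does not follow from the standard argument, and I do not see how to repair your sketch. The standard bounded-degree lemma applies to a \emph{first entry/return} chain: if $W'$ is a return domain of the \emph{nice} disk $V$ with return time $s_1$, then the pullbacks $P_j=f^j(W')$, $0\le j<s_1$, satisfy $P_j\cap V=\emptyset$ for $1\le j<s_1$, and one shows (using nestedness of pullbacks of the nice set $V$ and that the chain stays outside $V$ until time $s_1$) that each critical point lies in at most one $P_j$. In your construction, however, $s$ is the return time of $U''$ to $U=V[z_0,\lambda]$, which can be much larger than the first return time $s_1$ of $U''$ to $V$. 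Between times $s_1$ and $s$ the chain $f^j(\hat U)$ may re-enter $V\setminus U$ many times and pass through critical points in $V\setminus U$ each time; the nestedness argument gives no control over this count. Moreover $V'$ itself is a hyperbolic ball and is not nice, so nestedness of the pullbacks of $V'$ also fails. The suggestion that ``each occurrence forces the orbit to traverse a modulus-$\rho$ annulus, giving a uniform bound $N$ on the total branching'' conflates a modulus estimate with a count: traversing disjoint modulus-$\rho$ annuli can drive the size of the pullback to zero, but it places no bound on the number of times a critical point is met, and hence none on the degree.

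The paper avoids this entirely by never iterating past the first return to $V$. It takes $W\subset W'$ with $W'$ the return domain of $V$ containing $W$ (and notes $W'\subset U$, since a return domain of $V$ meeting $U$ must be absorbed by the filling), applies only the first return map $R_V=f^{s_1}$ on $W'$, whose degree is bounded by the standard lemma, observes that $R_V(W)$ lands either in $U$ (where $\mod(V;U)$ is already controlled from the first half) or in a return domain of $V$ (where $\mod(V;\cdot)\ge\rho$ by hypothesis), and pulls that modulus back through $R_V|_{W'}$ into $W'\subset U$. This one-step decomposition sidesteps the unbounded iterate count that your pullback of $V'$ runs into. To fix your proof you would essentially need to switch to this decomposition; as written, the degree bound is a genuine gap.
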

\begin{proof} Write $U=V[z_0,\lambda]$.
Since $V$ is $\rho$-nice, the hyperbolic diameters of return
domains of $V$ in $V$ are uniformly bounded from above by a constant
depending only on $\rho$. It follows that the hyperbolic diameter of
$V_*[z_0,\lambda]$, hence that of $V[z_0,\lambda]$ in $V$ is bounded
from above by a constant depending only on $\rho$. Consequently,
$\mod(V; {U})$ is bounded away from zero. By the Riemann mapping
theorem and the Koebe distortion theorem, we obtain that $\Shape
(U,z_0)$ is bounded from above.

Let us prove that $U$ is $\rho'$-nice. Indeed, each return domain
$W$ of $U$ is contained in a return domain $W'$ of $V$. The first
return map $R_V$ to $V$ maps $W$ into either $U$ or a return domain
of $V$. In both cases, we have that $\mod(V; R_V(W))$ is bounded
away from zero. Since $R_V|W'$ has bounded degree and since
$W'\subset U$, we obtain that $\mod(U; {W})$ is bounded away from
zero.
\end{proof}

We say that $(\cV', \cV)$ is an {\em admissible pair} of neighborhoods of a set $A\subset \Crit'(f)$
if the following hold:
\begin{itemize}
\item $\cV \subset \cV'$;
\item $\cV$ (resp. $\cV'$) is nice and each component of $\cV$ (resp. $\cV'$)
is a topological disk containing exactly one
point of $A$;
\item for all $n\ge 1$ and $a\in A$,
$$f^n(\partial \cV_a)\cap \cV_a'=\emptyset,$$
where $\cV_a$ (resp. $\cV_a'$) denotes the component of $\cV$ (resp.
$\cV'$) which contains $a$.
\end{itemize}
We say that $(\cV', \cV)$ is
called {\em $\rho$-bounded} if for every $a\in A,$ we have
$$\mod(\cV_a'; \cV_a)\ge \rho.$$

For each $c_0\in\Crit'(f)$, let
$$\Back(c_0)=\{c\in\Crit'(f): \overline{\{f^n(c):n\ge 0\}}\ni
c_0\}\ni c_0.$$ We shall need the following lemma which is essentially \cite[Lemma
6.3]{KSS}.
\begin{lemma} \label{lem:admdeg}
Let $c_0\in\Crit'(f)$ and let $(\cV', \cV)$ be an
admissible pair of neighborhoods of $\Back(c_0)$. Assume
that $\max_{c\in \Back(c_0)} \cV'_c$ is sufficiently small. Let $U$
be an entry domain of $\cV$ with entry time $s$, let
$c_1\in\Back(c_0)$ be such that $f^s(U)=\cV_{c_1}$, and let $U'$ be
the component of $f^{-s}(\cV'_{c_1})$ that contains $U$, then the
degree of $f^s: U'\to \cV'_{c_1}$ does not exceed a universal
constant $N_0$.
\end{lemma}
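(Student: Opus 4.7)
The plan is to factor the map $f^s\colon U'\to \cV'_{c_1}$ through the pullback chain $U'=U'_0, U'_1,\ldots, U'_s=\cV'_{c_1}$, where $U'_i$ denotes the component of $f^{-(s-i)}(\cV'_{c_1})$ that contains $f^i(U)$. Then $f(U'_i)\subset U'_{i+1}$ and
$$\deg(f^s\colon U'\to \cV'_{c_1})=\prod_{i=0}^{s-1}\deg(f|_{U'_i}),$$
with a factor exceeding $1$ precisely when $U'_i$ contains a critical point of $f$. So it suffices to bound, uniformly in $U$ and $s$, the total critical multiplicity of $f$ accumulated along the chain.

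First I would argue that every critical point $c'\in\Crit(f)$ appearing in some $U'_i$ with $i<s$ belongs to $\Back(c_0)$. By construction $f^{s-i}(c')\in\cV'_{c_1}$, and if $\max_{a\in\Back(c_0)}\diam\cV'_a$ is small enough, this puts $f^{s-i}(c')$ arbitrarily close to $c_1$. Since $c_1\in\Back(c_0)$, the orbit of $c_1$ approximates $c_0$; by continuity of an appropriate iterate $f^m$, the orbit of $c'$ approximates $c_0$ as well, forcing $c_0\in\overline{\{f^n(c'):n\ge 0\}}$, i.e.\ $c'\in\Back(c_0)$. In particular the set of critical points that can ever appear in the chain has cardinality at most $\#\Crit'(f)\le d-1$, and each has order at most $d$. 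So it remains to bound the number of indices $i$ at which a fixed $c'\in\Back(c_0)$ can appear in $U'_i$ by a universal constant.

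This last step, which is the crux, is where I would exploit the admissibility of $(\cV',\cV)$. Suppose $0\le i_1<i_2\le s$ are two indices with $c'\in U'_{i_1}\cap U'_{i_2}$, and let $T_j\subset U'_{i_j}$ be the component of $f^{-(s-i_j)}(\cV_{c_1})$ containing $c'$. Then $f^{i_2-i_1}(T_1)$ is a pullback of $\cV_{c_1}$ containing $f^{i_2-i_1}(c')$, hence it is an entry domain of $\cV$ of some entry time $t\le s-i_2$. Using that $\cV$ is nice and that, by admissibility, $f^n(\partial \cV_{c'})\cap \cV'_{c'}=\emptyset$ for every $n\ge 1$, one shows that either $f^{i_2-i_1}(T_1)\subset \cV_{c'}$ or $f^{i_2-i_1}(T_1)=\cV_{c'}$, so that pulling back by $f^{i_2-i_1}$ gives $T_1\subset \cV_{c'}$ as well; chaining this argument across multiple visits then stacks nested admissible pullbacks of $\cV_{c'}$ inside each other. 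Since the first entry time of $c'$ into $\cV$ is well-defined and finite, this stacking can proceed only boundedly many times before forcing a contradiction, which caps the number of visits by a universal constant.

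The main obstacle is this last combinatorial/geometric step: the argument that admissibility plus niceness together limit the recurrence of a single critical point through the pullback chain. This is precisely the point where our setup is modeled on \cite[Lemma 6.3]{KSS}, and the proof should follow the same enhanced-nest logic. Once this multiplicity bound is in hand, combining it with the cardinality bound on $\Back(c_0)$ and the bound $\ell_c\le d$ produces a universal constant $N_0$ such that $\deg(f^s\colon U'\to \cV'_{c_1})\le N_0$.
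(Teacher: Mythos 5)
Your overall structure (decompose the degree into local degrees along the pullback chain, reduce to critical points in $\Back(c_0)$ via smallness, then bound the multiplicity of each critical point) is reasonable, and your step controlling which critical points can appear is essentially the same smallness argument the paper uses. But at the crux — bounding how often a fixed $c'\in\Back(c_0)$ can appear in the chain — your proposal has a genuine gap, and in fact you are working much harder than necessary.

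The paper's proof is a one-line observation that you overlook: since $U$ is an \emph{entry} domain of $\cV$ with entry time $s$, we have $f^i(U)\cap\cV=\emptyset$ for every $1\le i\le s-1$. Now if some $c\in\Back(c_0)$ were contained in $f^i(U')$ for such an $i$, then $f^i(U')$ (the component of $f^{-(s-i)}(\cV'_{c_1})$ containing $c$) would, by the niceness/admissibility structure, be trapped inside $\cV_c$; in particular $f^i(U)\subset f^i(U')\subset\cV_c\subset\cV$, contradicting that $s$ is the entry time. So in fact \emph{no} critical point of $\Back(c_0)$ appears in $f^i(U')$ for $1\le i\le s-1$, not merely a bounded number of them; the whole chain $f(U')\to f^2(U')\to\cdots\to\cV'_{c_1}$ is critical-point free, and $\deg(f^s\colon U'\to\cV'_{c_1})=\deg(f\colon U'\to f(U'))\le d$. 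You set up exactly this picture (``$f^{i_2-i_1}(T_1)$ is $\ldots$ an entry domain of $\cV$ of some entry time $t\le s-i_2$'') but never exploit the fact that $U$ itself is an entry domain, which is what makes the appearance of $c'$ impossible rather than merely rare.

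By contrast, your replacement for that step — ``stacking nested admissible pullbacks of $\cV_{c'}$ inside each other'' and then appealing to the finiteness of the first entry time of $c'$ into $\cV$ — does not yield a \emph{universal} constant: the entry time of $c'$ into $\cV$ depends on $c'$ and on $\cV$, and as stated the argument gives no quantity depending only on $d$. You also flag this step yourself as ``the crux'' and defer to the ``enhanced-nest logic'' of \cite[Lemma 6.3]{KSS} without supplying it; that is precisely the content the lemma is asking you to prove. So the proposal as written is both more complicated than the paper's route and incomplete at the decisive point.

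One smaller issue: your justification that $c'\in\Back(c_0)$ (continuity of ``an appropriate iterate $f^m$'') is not quite the right mechanism. The correct statement is that for each critical point $c'$ with $c_1\notin\omega(c')$, the forward orbit $\{f^n(c'):n\ge 1\}$ stays a definite positive distance from $c_1$; taking $\cV'_{c_1}$ smaller than the minimum of these finitely many distances forces any $c'$ whose orbit enters $\cV'_{c_1}$ to lie in $\Back(c_1)$, and then $\Back(c_1)\subset\Back(c_0)$ because $\overline{\{f^n(c'):n\ge 0\}}\supset\overline{\{f^n(c_1):n\ge 0\}}\ni c_0$. This is what the paper's opening sentence of the proof is recording.
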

\begin{proof} Assume that $\diam(\cV'_{c_1})$ is small enough, so that for any $c\in\Crit'(f)$, if there exists $k\ge 1$ such that $f^k(c)\in \cV_{c_1}$ then $c\in\Back(c_1)\subset \Back(c_0)$.
For $c\in\Back(c_0)$, and any $i\in\{1,2,\ldots, s-1\}$, $f^i(U')\not\ni c$, for otherwise,
we would have $f^{i}(U')\subset \cV_{c}$, contradicting the hypothesis that $s$ is the entry time of $U$ into $\cV$.
\end{proof}
\subsection{Nested nice sets}
In this section, we shall prove the following proposition, which is
a crucial step of the proof of Theorem~\ref{thm:ld2bc}.

\begin{proposition}[Nested nice sets]\label{prop:nicenets}
There exist universal constants $K_*>0$, $\kappa_*>0$ and $C_*>0$
such that if $f$ satisfies $LD(K)$ with $K\ge K_*$, then for each
$c_0\in \Crit'(f)$, there exists an infinite sequence of nice
topological disks
$$V_1\supset V_2\supset \cdots$$ that contain $c_0$ such that
the following hold:
 \begin{enumerate}
 \item[1.] $\diam (V_k)\to 0$ as $k \to
 \infty$;
 \item[2.] for each $k\ge 1$, $\Shape (f(V_k), f(c_0))\le C_*$;
 \item[3.] for each $k\ge 1$, $\kappa_*\diam (f(V_k))\le \diam(f(V_{k+1}))$;
 \item[4.] for each $k\ge 1$, and any $c\in\Back(c_0)$, we have
 $$\diam (f(\sL_{c}(V_k)))\le \frac{C_*}{K} \diam (f(V_k)).$$
 \end{enumerate}
\end{proposition}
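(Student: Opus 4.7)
My plan is to construct the $V_k$ as filled hyperbolic balls inside a single very small $\rho_0$-nice topological disk $W\ni c_0$ produced by Proposition~\ref{prop:aprioribounds}, with hyperbolic radii shrinking geometrically; the large-derivative hypothesis will enter only in the verification of property~4, via a standard Koebe-plus-degree-bound argument. I fix such a $W$ contained in the LD-neighborhood of Definition~\ref{def:ld} and small enough that $f(z)=f(c_0)+\varphi(z)^{\ell_{c_0}}$ holds on a neighborhood of $\overline{W}$ for some univalent $\varphi$ with $\varphi(c_0)=0$; this normalization lets a bounded Euclidean shape at $c_0$ be upgraded to a bounded shape at $f(c_0)$ at the cost of a factor growing like $\ell_{\max}(f)$. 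For each $c\in\Back(c_0)\setminus\{c_0\}$ I put $W_c:=\sL_c(W)$, so that $\cV:=W\cup\bigcup_{c\neq c_0}W_c$ is nice with one component at each element of $\Back(c_0)$ and, provided $W$ is small enough, each $W_c\subset V$.

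For $\lambda\in(0,\lambda_1]$ set $V(\lambda):=W[c_0,\lambda]$. By Lemma~\ref{lem:filling}, there exist universal constants $\rho',M$ (depending only on $\rho_0$ and $\lambda_1$) such that every $V(\lambda)$ is $\rho'$-nice with $\Shape(V(\lambda),c_0)\leq M$, hence $\Shape(f(V(\lambda)),f(c_0))\leq C_*$. The map $\lambda\mapsto V(\lambda)$ is monotone increasing with $V(\lambda)\downarrow\{c_0\}$ as $\lambda\downarrow 0$, and $\lambda\mapsto\diam(f(V(\lambda)))$ is continuous except at those $\lambda$ where a new return domain of $W$ first meets the hyperbolic ball $B_W(c_0,\lambda)$. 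Since return domains in a $\rho_0$-nice disk have uniformly bounded hyperbolic diameter, combining Lemma~\ref{lem:filling} with the Koebe distortion theorem bounds each such jump by a universal fraction of the current value of $\diam(f(V(\lambda)))$. Iteratively choosing $\lambda_1>\lambda_2>\cdots\to 0$ with $V_k:=V(\lambda_k)$ and $\diam(f(V_{k+1}))\in[\kappa_*\diam(f(V_k)),\tfrac{1}{2}\diam(f(V_k))]$ then delivers properties 1--3.

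For property~4, fix $c\in\Back(c_0)$ with first entry time $s\geq 1$ of $c$ into $V_k$, and set $U:=\sL_c(V_k)$. I form the admissible pair $\cV_{c_0}:=V_k\subset\cV'_{c_0}:=W$ together with $\cV_a:=\cV'_a:=W_a$ for $a\neq c_0$: admissibility at $c_0$ is the identity $f^n(\partial V[z_0,\lambda])\cap V=\emptyset$ recorded just before Lemma~\ref{lem:filling}, while at the other centers it follows from the construction of $\cV$ as the union of $W$ and its entry domains. Lemma~\ref{lem:admdeg}---applied, if necessary, along the chain of first entries of $c$'s orbit into $\cV$, with the $LD(K)$ expansion at each intermediate visit to some $W_a\subset V$ absorbing the cumulative $N_0^m$ degree---bounds the degree of $f^s:U'\to W$ by a universal constant, where $U'\supset U$ is the component of $f^{-s}(W)$ containing $c$. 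Since $\lambda_k\leq\lambda_1$ forces $\mod(W;V_k)\geq\rho$ for a universal $\rho>0$, Lemma~\ref{lem:koebevariation} applied at $x=c$ yields
\[
|Df^s(f(c))|\leq A_0\cdot\frac{\diam(f(V_k))}{\diam(f(\sL_c(V_k)))}.
\]
Because $f^s(c)\in V_k\subset W$ lies in the LD-neighborhood, $f\in LD(K)$ forces $|Df^s(f(c))|\geq K$, and rearranging proves property~4 with $C_*:=A_0$.

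I anticipate the chief obstacles to be: (a) the quantitative bound on the jumps of $\lambda\mapsto\diam(f(V(\lambda)))$ needed in the construction of $\lambda_k$, which requires converting the hyperbolic-diameter control of the return domains of $W$ into Euclidean control via Koebe and the shape bound of Lemma~\ref{lem:filling}; and (b) the subtle degree analysis in property~4 when $c$'s forward orbit enters some $W_a$ before entering $V_k$, for which a direct application of Lemma~\ref{lem:admdeg} is unavailable and one must instead argue along the chain of first entries into $\cV$, using the LD expansion at each intermediate entry to offset the cumulative degree growth.
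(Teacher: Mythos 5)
Your construction cannot give property~1, and the point of failure is the claim that $V(\lambda):=W[c_0,\lambda]\downarrow\{c_0\}$ as $\lambda\downarrow 0$. Recall the definition preceding Lemma~\ref{lem:filling}: $V_*[z_0,\lambda]$ is the union of the hyperbolic ball $B_V(z_0,\lambda)$ \emph{and all return domains of $V$ meeting it}, and $V[z_0,\lambda]$ is its filling. In the interesting case that $c_0$ is recurrent (which holds whenever $\sL_{c_0}(W)\neq\emptyset$, i.e.\ whenever property~4 has content for $c=c_0$), the return domain $D_0:=\sL_{c_0}(W)$ of $W$ contains $c_0$, and since $c_0\in B_W(c_0,\lambda)\cap D_0$ for \emph{every} $\lambda>0$, the set $D_0$ is swept into $V_*[c_0,\lambda]$ and hence $D_0\subset W[c_0,\lambda]$ for all $\lambda>0$. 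Therefore $\diam(V(\lambda))\ge\diam(D_0)>0$ is bounded away from zero, and your desired sequence $\lambda_k$ with $\diam(f(V_{k+1}))\le\tfrac12\diam(f(V_k))$ cannot exist: the diameters are bounded below by $\diam(f(D_0))$. The paper avoids this by making the construction \emph{recursive}: Lemma~\ref{lem:recursive} passes from a bounded puzzle piece $\hV$ to a strictly deeper one $\hV_1\subset\tV\subset\hV[c_0,1]$ with $\diam(f(\hV_1))\le e^{-1}\diam(f(\hV))$, and Lemma~\ref{lem:nicepuzzle} iterates this. Critically, that descent uses the large-derivative hypothesis (via Lemmas~\ref{lem:deepV1} and~\ref{lem:out2innbounds}) at every level, to guarantee that the return domain at $c_0$ is small enough that one can shrink past it while retaining niceness and bounded shape. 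So your premise that ``the large-derivative hypothesis enters only in property~4'' is also incorrect: it is essential already for properties~1--3.

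Your argument for property~4 has a second, lesser issue, which you partially anticipate: $s$ is the first entry time of $c$ into $V_k$, not into $\cV$, so the orbit of $c$ may visit the components $W_a$ several times before reaching $V_k$, and Lemma~\ref{lem:admdeg} does not apply directly. Moreover, the ``$LD(K)$ expansion absorbing the cumulative $N_0^m$ degree'' does not make sense as stated, because $LD$ controls derivatives along the critical orbit, not topological degree; a degree bound must come from a combinatorial argument as in Lemma~\ref{lem:admdeg} or \cite[Lemma~6.3]{KSS}. What the paper actually does in Lemma~\ref{lem:deepV1} is to push forward along the chain of first entries into $\cV$, at each step applying the Koebe-type Lemma~\ref{lem:koebevariation} together with $LD(K)$ to control \emph{diameters} (not degrees), yielding the quantitative bound $\diam(f(\sL_c(V)))\le CK^{-1}\diam(f(V))$. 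That is the estimate your final rearrangement needs, but it is established by a diameter-contraction argument rather than by trying to deduce a universal degree bound for the full entry time $s$.
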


To prove this proposition, we need a few lemmas.

Given a point $c\in \Crit'(f),$ a topological disk $W\ni c$ is
called {\em $(\rho, M)$-bounded} if $W$ is $\rho$-nice and $W$
satisfies $\Shape (W, c)\le M.$ Sometimes we say that $W$ is
uniformly bounded if it is $(\rho, M)$-bounded for some universal
constants $\rho$ and $M$.

\begin{lemma}\label{lem:deepV1}
Given any $\rho>0$ and $M>0$,  there exist $K_1=K_1(\rho)>0$ and
$C=C(\rho, M)>1$ such that if $f$ satisfies $LD(K)$ with $K\ge K_1$,
the following holds. Given $c_0\in \Crit'(f)$ and a
$(\rho,M)$-bounded puzzle piece $\hV\ni c_0$ with $\diam (\hV)$
sufficiently small, then $V:=\hV[c_0,1]$ satisfies the following
properties:
\begin{enumerate}
\item[(i)] for each $c\in\Back(c_0)$, we have
$$\diam (f(\sL_c(V)))\le \frac{C}{K} \diam (f(V)).$$
\item[(ii)] For each return domain $U$ of $V$, either
$$U \subset \tB(c_0, CK^{-1} \diam (f(V)))$$ or $$\diam (U)\le C
\dist (U, c_0).$$
\end{enumerate}
\end{lemma}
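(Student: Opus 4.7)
My plan is to apply a Koebe-type distortion estimate to the branched pullbacks of $f^s$ along orbits of $c\in\Back(c_0)$, and use the $LD$ hypothesis to extract a large derivative. I would first fix an admissible pair $(\cV',\cV)$ of neighborhoods of $\Back(c_0)$ with $\cV_{c_0}=V$ and $\mod(\cV'_{c_0};V)\ge\rho_1$ universal; such a pair is available from the nice puzzle structure of Proposition~\ref{prop:aprioribounds} together with suitable pullbacks of $\hat V$. Lemma~\ref{lem:filling} applied to $V=\hat V[c_0,1]$ guarantees $V$ has bounded hyperbolic diameter in $\hat V$, giving the universal lower modulus bound.

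\textbf{Proof of (i).} For $c\in\Back(c_0)$ with entry time $s=k(c)$ into $V$, I shrink $\diam(\hat V)$ so that $V\subset\hat V$ lies inside the $LD$-neighborhood $W$; then $LD(K)$ gives $|Df^s(f(c))|\ge K$. Lemma~\ref{lem:admdeg} applied to $(\cV',\cV)$ bounds the degree of the pullback $f^s:U'\to\cV'_{c_0}$ by a universal $N_0$, where $U'\supset U:=\sL_c(V)$. Lemma~\ref{lem:koebevariation} at $x=c\in U$, with outer pair $(\cV'_{c_0},V)$ and inner pair $(U',U)$, yields
\[
|Df^s(f(c))|\le A_0\,\frac{\diam(f(V))}{\diam(f(U))}
\]
for universal $A_0=A_0(\rho_1,N_0)$. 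Combining gives $\diam(f(U))\le(A_0/K)\diam(f(V))$, proving (i) with $C=A_0$.

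\textbf{Proof of (ii).} For a return domain $U\subset V$ with $f^s(U)=V$, let $U'\supset U$ be its pullback with $f^s(U')=\cV'_{c_0}$; admissibility of $(\cV',\cV)$ forces $U'\subset V$. The preimage $f^{-s}(\cV'_{c_0}\setminus\overline V)\cap U'$ decomposes into pairwise disjoint annuli $A_{U_i}$, one surrounding each component $U_i$ of $f^{-s}(V)\cap U'$, each of modulus $\ge\rho_1/N_0=:\rho_2$. For $\hat V$ small, $c_0$ is the only critical point in $V$; so if $c_0\in U$, then $U=\sL_{c_0}(V)$ and (i) with $c=c_0$ gives $U\subset\widetilde{B}(c_0,A_0K^{-1}\diam(f(V)))$, the first alternative. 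If $c_0\notin U$, then except when $c_0\in A_U$ itself, the annulus $A_U$ separates $U$ from $c_0$, and the standard diameter-in-annulus inequality gives $\diam(U)\le C\dist(U,c_0)$ with $C=C(\rho_2,M)$.

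\textbf{Main obstacle.} The delicate residual case is $c_0\in A_U$ in (ii): the critical point sits inside the pullback collar surrounding $U$, so no single annulus separates $U$ from $c_0$. I would handle this by applying $LD(K)$ to the orbit of $c_0$: since $f^s(c_0)\in\cV'_{c_0}\subset W$, we have $|Df^s(f(c_0))|\ge K$, and a Koebe-type estimate for the branched cover $f^s:U'\to\cV'_{c_0}$ centered at $c_0$ bounds $\diam(U')$ by order $(K^{-1}\diam(f(V)))^{1/\ell_{c_0}}$, forcing $U\subset U'\subset\widetilde{B}(c_0,CK^{-1}\diam(f(V)))$ and recovering the first alternative of the dichotomy. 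The threshold $K_1(\rho)$ is chosen large enough to make this pullback small enough.
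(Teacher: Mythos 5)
Your part (i) has a genuine gap at the degree bound. You take $s$ to be the entry time of $c\in\Back(c_0)$ into $V$ and then invoke Lemma~\ref{lem:admdeg} to bound $\deg(f^s\colon U'\to\cV'_{c_0})$ by $N_0$. But Lemma~\ref{lem:admdeg} controls only the \emph{first} entry map to the whole union $\cV=\bigcup_a\cV_a$; for $c\ne c_0$ the orbit of $c$ will in general visit other components $\cV_{c'}$ several times (it must, whenever it passes near $c'$) before finally landing in the particular component $\cV_{c_0}=V$, so the landing time into $V$ is strictly longer than the time the lemma controls, and the degree of the corresponding pull-back has no a priori bound. The paper sidesteps this by not pulling back $V$ at all: it takes $t_i$ to be the landing time of $c_i$ into the \emph{larger} piece $\hV$, for which the degree of $f^{t_i}\colon\widehat{\sL}_{c_i}(\hV)\to\hV$ is automatically at most $\deg(f)^{m}$ (each critical point is hit at most once before first return to $\hV$), and then pulls back whichever element $D_i$ of the family $\mathcal{D}=\{V\}\cup\{\text{return domains of }\hV\text{ outside }V\}$ contains $f^{t_i}(c_i)$; the resulting $V^i=\comp_{c_i}f^{-t_i}(D_i)$ contains $\sL_{c_i}(V)$, and Lemma~\ref{lem:koebevariation} together with $LD(K)$ applied at time $t_i$ (not at the entry time into $V$) gives the estimate. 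This intermediate target $D_i$ is what makes the degree bound available, and it is the missing idea in your sketch.

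In part (ii) you correctly isolate the bad case, but the argument is incomplete in two ways. First, $U'\setminus\overline{U}$ is generally not a single annulus (it contains the other preimage components $U_j$ of $V$ in $U'$), so the trichotomy based on ``$c_0\in A_U$'' is not well posed without specifying how $A_U$ is chosen relative to $c_0$. Second, the Koebe step you gesture at for the bad case needs an outer pair around $(U',\cV'_{c_0})$ with bounded modulus and bounded pull-back degree, which you never produce; without it Lemma~\ref{lem:koebevariation} does not bound $\diam f(U')$. The paper manufactures exactly this extra level by a half-modulus thickening: $D_i\subset\tD_i\subset\hV$, $\tV^i=\comp f^{-t_i}(\tD_i)$, $\tU=\comp f^{-s}(\tV^i)$, so that $\tU\setminus\overline{U}$ \emph{is} a genuine annulus of definite modulus and the dichotomy ``$c_0\notin\tU$'' versus ``$c_0\in\tU$'' is clean; in the latter case Lemma~\ref{lem:koebevariation} is applied to $f^s\colon\hU\to\hV^i$ with inner pair $(\tU,\tV^i)$ at the base point $c_0$, using $LD(K)$ at the return time $s$. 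Note also that in the paper part (i) for $c=c_0$ is a corollary of this same claim (a return domain of $\mathcal{V}$ containing $c_0$ must fall into the first alternative), so (i) and (ii) are not logically independent in the way your outline presents them.
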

\begin{proof}
Let us first construct an admissible pair $(\widehat{\mathcal{V}},
\mathcal{V})$ of neighborhoods of $\Back(c_0)$ as follows. Let
$\mathcal{D}$ be the collection of return domains of $\hV$ outside
$V$ and $V$ itself. Let $c_1, c_2,\ldots, c_m$ be the critical
points in $\Back(c_0)\setminus \{c_0\}$. For each $i=0,1,\ldots, m$,
let $\hV^{i}=\widehat{\sL}_{c_i}(\hV)$ and let $t_i$ be the landing
time of $c_i$ into $\hV$.  Let $D_i$ be the element of $\mathcal{D}$
that contains $f^{t_i}(c_i)$ and let
$V^{i}=\comp_{c_i}(f^{-t_i}(D_i))$.(So $t_0=0$, $\hV^0=\hV$, and
$V^0=V$.) Moreover, let
$$\widehat{\mathcal{V}}=\bigcup_{i=0}^m \hV^i,\hspace{2mm}
%\widetilde{\mathcal{V}}=\bigcup_{i=0}^m \tV^i,\hspace{2mm}
\mathcal{V}=\bigcup_{i=0}^m V^i.$$

For each $i=1,2,\ldots, m$, the degree of the first entry map
$f^{t_i}: \hV^i \to\hV$ is bounded from above by  $N:=\deg (f)^m$.
Since $\hV$ is $\rho$-nice, by Lemma~\ref{lem:filling}, there exists
$\rho'=\rho'(\rho)>0$ such that $\mod(\hV; {D})> \rho'$,
$D\in\mathcal{D}$.
%is bounded away from zero,
Thus
$$\mod(\hV^i; {V^i})\ge \rho'/N.$$
Provided that $\diam(\hV)$ is small enough, by the assumption that
$f$ satisfies $LD(K)$, we have $$|Df^{t_i}(f(c_i))|\ge K, i=1,2,\ldots, m.$$ Thus, by
Lemma~\ref{lem:koebevariation}, we have
$$\diam (f(V^i))\le \frac{A_0(\rho', N)}{|Df^{t_i}(f(c_i))|} \diam (f(D_i))\le \frac{A_0(\rho', N)}{K} \diam (f(\hV)).$$
Since $\Shape (\hV, c_0)\le M$, there exists $M'$ depending only on
$M$ such that $$\diam (f(\hV))\le M'\diam (f(V)).$$ Since
$\sL_{c_i}(V)\subset V^i$, we obtain that
\begin{equation}\label{eqn:sLc_iV}
\diam (f(\sL_{c_i}(V)))\le \diam (f(V^i))\le C_1K^{-1} \diam (f(V)),
\end{equation}
holds for all $i=1,2,\ldots, m$, where $C_1=A_0(\rho', N)M'$ is a
constant.

Assume now that $f$ satisfies $LD(K)$ with $K\ge K_1:=A_0(\rho'/2,
N)$.

 \noindent{\bf
Claim.} There exists a constant $C=C(\rho, M)$ such that if
$\diam(\hV)$ is small enough, then the following holds: for each
return domain $U$ of $\mathcal{V}$ with $U\subset V$,
$$\text{either }U\subset \tB(c_0, CK^{-1}\diam (f(V))) \text{ or }\diam
(U)\le C\dist (U, c_0).$$

To prove this claim, let $\tD_i$ be the topological disk with
$D_i\subset \tD_i\subset \hV$ and with $\mod(\hV;\tD_i)=\mod (\tD_i;
D_i)=\mod(\hV; D_i)/2$, and let $\tV^i$ be the component of
$f^{-t_i}(\tD_i)$ that contains $c_i$. Then $$\mod(\hV^i; \tV^i)\ge
\mu:=\frac{\rho'}{2N}, \,\, \mod(\tV^i; V^i)\ge \mu$$ and
$$\diam (f(\tV^i))\le \frac{A_0(\rho'/2, N)}{K} \diam (f(\tD_i))\le \diam (f(\hV)).$$
For each return domain $U$ of $\mathcal{V}$ with return time $s$ and
with $f^s(U)=V^i$, let $\hU$ and $\tU$ be the components of
$f^{-s}(\hV^i)$ and $f^{-s}(\tV^i)$ that contain $U$ respectively.
By Lemma~\ref{lem:admdeg},  the degree of $f^s: \hU\to \hV^i$ is
bounded from above by $N_0$, $\mod(\tU; {U})\ge \rho'/(2NN_0)$. If
$\tU\not\ni c_0$, then $\diam (U)/\dist(U, c_0)$ is bounded from
above by a constant depending on $\rho$. If $\tU\ni c_0$, then by
Lemma~\ref{lem:koebevariation}, we obtain $$\diam(f(\tU))\le
A_0(\mu, N) K^{-1} \diam (f(\tV^i))\le A_0(\mu, N) K^{-1}\diam
(f(\hV)),$$ which implies that $U\subset \tB(c_0,
CK^{-1}\diam(f(V)))$, where $C=A_0(\mu, N) M'$ depends only on
$\rho$ and $M$. This proves the claim.

Since each return domain of ${V}$ is contained in a return domain of
$\mathcal{V}$, the statement (ii) follows from the claim. Moreover,
the claim implies that $\diam (f\sL_{c_0}(V))\le CK^{-1}\diam
(f(V))$, which together with (\ref{eqn:sLc_iV}) implies (i).
\end{proof}

\begin{lemma}\label{lem:3deep}
For each $\rho>0$, there exists $K_2=K_2(\rho)>0$ such that if $f$
satisfies $LD(K_2)$, then the following holds. If $W$ is a
$\rho$-nice puzzle piece that contains a critical point
$c\in\Crit'(f)$ and if $\diam (W)$ is small enough, then either
$\sL_c^3 (W)=\emptyset$, or
$$\mod (W; {\sL_c^3(W)})\ge 1.$$
\end{lemma}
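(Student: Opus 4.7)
Assuming $\sL_c^3(W)\neq\emptyset$, set $W_i := \sL_c^i(W)$ for $i=0,\ldots,3$ (with $W_0=W$). The plan is to produce an auxiliary nice piece $V\subset W$ of universally bounded shape around $c$ satisfying (a) $W_3\subset\sL_c(V)$, and (b) $\mod(V;\sL_c(V))\ge 1$. Since $V\subset W$ and $\sL_c(V)\supset W_3$, the round annulus $V\setminus\overline{\sL_c(V)}$ lies in $W\setminus\overline{W_3}$ and separates $W_3$ from $\partial W$, so the final inequality $\mod(W;W_3)\ge\mod(V;\sL_c(V))\ge 1$ follows at once.

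I will construct $V$ by two successive fillings. Lemma~\ref{lem:filling} applied to the $\rho$-nice $W$ gives $\hV:=W[c,1]$, which is $(\rho',M)$-bounded with $\rho',M$ depending only on $\rho$. The central return domain $W_1=\sL_c(W)$ contains $c$, hence meets $B_W(c,1)$, and is therefore absorbed by the filling: $\hV\supset W_1$. A second application of Lemma~\ref{lem:filling} to $\hV$ yields $V:=\hV[c,1]$, again of universally bounded shape, and the same absorption gives $V\supset\sL_c(\hV)$. The monotonicity of $\sL_c$ under nested nice sets---if $Y\subset X$ are nice with $c\in Y$ and $c\in D(Y)$, then $\sL_c(Y)\subset\sL_c(X)$, because the entry time to $Y$ is $\ge$ that to $X$ and niceness of $X$ confines the intermediate $f^j$-images of the relevant component to $\sL_c(X)$---chains these absorptions into $W_2=\sL_c(W_1)\subset\sL_c(\hV)\subset V$ and then $W_3=\sL_c(W_2)\subset\sL_c(V)$, establishing (a).

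For (b), I will invoke Lemma~\ref{lem:deepV1} with $\hV$ in its stated role and $V=\hV[c,1]$ as its output. Conclusion~(ii) applied to the central return domain $\sL_c(V)$ forces the first alternative (the second alternative collapses since $\sL_c(V)\ni c$), so $\sL_c(V)\subset\tB(c,\,CK^{-1}\diam f(V))$. Writing the local form $f(z)-f(c)=a(z-c)^{\ell_c}(1+o(1))$ at $c$ and using that $V$ has bounded shape about $c$, one gets $\diam f(V)\lesssim|a|(\diam V)^{\ell_c}$ and $\diam\tB(c,\delta)\lesssim (\delta/|a|)^{1/\ell_c}$; the leading coefficient $|a|$ cancels, giving the uniform estimate $\diam\sL_c(V)\lesssim K^{-1/\ell_c}\diam V$. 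Since $V$ contains a Euclidean ball about $c$ of radius comparable to $\diam V$, the concentric round annulus $\{r_{\rm in}<|z-c|<r_{\rm out}\}$ with $r_{\rm in}\asymp\diam\sL_c(V)$ and $r_{\rm out}\asymp\diam V$ yields $\mod(V;\sL_c(V))\gtrsim(\log K)/\ell_c$, which exceeds $1$ once $K\ge K_2(\rho)$ is large enough (depending on $\rho$ through the shape constants and on the universal bound $\ell_{\max}\le d-1$).

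The main technical point is the alignment in (a): one must check that hyperbolic radius $1$, applied twice, produces a $V$ that is simultaneously large enough to contain $W_2$ and small enough to retain bounded shape. Both sides work out for free here: absorption of every central return domain into a filling $[c,\lambda]$ is automatic because such a domain always meets the hyperbolic ball around $c$, while Lemma~\ref{lem:filling} guarantees bounded shape for any fixed $\lambda$. With (a) secured, the use of the $LD$ hypothesis in (b) via Lemma~\ref{lem:deepV1}, and the final modulus transfer through the concentric annulus, are routine.
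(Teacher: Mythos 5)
Your proposal is correct and follows essentially the same route as the paper's proof: construct $V=(W[c,1])[c,1]$ by two applications of Lemma~\ref{lem:filling}, invoke Lemma~\ref{lem:deepV1} with the $LD$ hypothesis to make $\sL_c(V)$ tiny relative to $V$, and conclude via $\sL_c^3(W)\subset\sL_c(V)$. The only (cosmetic) deviation is that you go through part~(ii) of Lemma~\ref{lem:deepV1} together with the local power-law form at $c$, where the paper quotes part~(i) directly and leaves the passage from the diameter bound to the modulus bound implicit; you also spell out the monotonicity argument behind $\sL_c^3(W)\subset\sL_c(V)$, which the paper states without proof.
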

\begin{proof} Let $\hV=W[c, 1]$, $V=\hV[c,1]$. By Lemmas~\ref{lem:filling},
there exists $\rho'>0$ and $M>1$ depending only on $\rho$ such that
both $\hV$ and $V$ are $(\rho', M)$-bounded. By
Lemma~\ref{lem:deepV1}, if $f$ satisfies $LD(K)$ with $K\ge
K_1(\rho')$, then
$$\diam (f(\sL_{c}(V)))\le CK^{-1} \diam (f(V)),$$
where $C=C(\rho', M)$ (depending only on $\rho$).  Thus $\mod(V;
\sL_{c}(V))\ge 1$ provided that $K$ is large enough. Since
$\sL_c^3(W)\subset \sL_c(V)$, the lemma follows.
\end{proof}

%Recall that $(V', V)$ of two nice sets is called {\em nice pair} if
%$\overline{V}\subset V'$ and $f^{n}(\partial V) \cap V' = \emptyset
%$ for all $n \geq 1.$
\begin{lemma}\label{lem:out2innbounds}
For each $\rho>0$, there exists $K_3=K_3(\rho)>0$ such that if $f$
satisfies $LD(K_3)$, then the following holds. Let $(V', V)$ be an
admissible pair of neighborhoods of some $c_0\in\Crit'(f)$ such that
$V'$ is $\rho$-nice and $\mod(V'; V)\ge 1$. Assume furthermore that
$\diam (V')$ is small enough. Then $V$ is $\rho_*$-nice, where
$\rho_*>0$ is a universal constant (independent of $\rho$).
\end{lemma}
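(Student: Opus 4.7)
My plan is to show that every first return domain $U$ of $V$ is enclosed in $V$ by an annulus of modulus at least some universal constant $\rho_*$. Given such a $U$ with first return time $s$, I would first consider the pullback $U':=\comp_U(f^{-s}(V'))$, so that $f^s:U'\to V'$ is proper. The admissibility hypothesis $f^n(\partial V)\cap V'=\emptyset$ for all $n\ge 1$, applied with $n=s$, prevents $\partial V$ from meeting $U'$: any $w\in\partial V\cap U'$ would satisfy $f^s(w)\in V'\cap f^s(\partial V)=\emptyset$. Together with connectedness of $U'$ and the fact that $U\subset V$, this forces $U'\subset V$. Next, I would pull back the annulus $V'\setminus\overline{V}$ of modulus $\ge 1$ via $f^s|_{U'}$; the preimage is an open set in $U'\setminus\overline{U}\subset V\setminus\overline{U}$ whose components are annuli, one of which separates $U$ from $\partial U'$. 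By the degree formula for covering maps, this component has modulus at least $1/\deg(f^s:U'\to V')$. Hence
$$\mod(V;U)\ge \mod(U';U)\ge \frac{1}{\deg(f^s:U'\to V')},$$
and the problem reduces to bounding this degree by a universal constant $N_0$.

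For the degree bound, I would extend $(V', V)$ to an admissible pair $(\cV',\cV)$ of neighborhoods of $\Back(c_0)$ as follows: set $\cV'_{c_0}:=V'$, $\cV_{c_0}:=V$, and for each $c\in\Back(c_0)\setminus\{c_0\}$ let $t_c\ge 1$ be the first landing time of $c$ in $V$ and define $\cV'_c:=\comp_c(f^{-t_c}(V'))$, $\cV_c:=\comp_c(f^{-t_c}(V))$. Applying $LD(K_3)$ together with Lemma~\ref{lem:koebevariation} yields a diameter estimate of the form $\diam \cV'_c\lesssim \diam(f(V'))/K_3$; by taking $K_3=K_3(\rho)$ sufficiently large and $\diam(V')$ sufficiently small, the components $\cV'_c$ become pairwise disjoint and the admissibility of $(V', V)$ at $c_0$ lifts to admissibility of $(\cV',\cV)$ at all of $\Back(c_0)$. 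Lemma~\ref{lem:admdeg} then provides a universal bound on the degree of each first-entry pullback into $\cV$, and by smallness of the $\cV_c$'s the passage of $U$ into $\cV$ is solely through $\cV_{c_0}=V$ at time $s$, giving $\deg(f^s:U'\to V')\le N_0$ and thus $\rho_*=1/N_0$.

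The main obstacle lies in this last step. Even with the admissible pair $(\cV',\cV)$ constructed, one still needs to ensure that $U$ is a first entry domain of $\cV$ landing in the $c_0$-component at time exactly $s$, i.e.\ that $f^j(U)\cap\cV_c=\emptyset$ for every $0<j<s$ and every $c\in\Back(c_0)\setminus\{c_0\}$. An earlier entry at time $j$ would force $t_c\ge s-j$ and require $f^j(U)$ to meet the (very small) neighborhood $\cV_c$, so it is the $LD(K_3)$ condition that must be leveraged to exclude or tightly control such intermediate entries. If a few such entries persist, one would instead decompose $f^s$ through the successive entry events, invoke Lemma~\ref{lem:admdeg} at each step, and bound the total number of events by a universal constant using, once again, the large derivative hypothesis together with the smallness of $\diam(V')$.
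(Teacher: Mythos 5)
The first half of your argument---pulling back $V'\setminus\overline V$ to get $\mod(V;U)\ge \mod(U';U)\ge 1/\deg(f^s:U'\to V')$---is fine and matches the paper's setup. The gap is in the second half, and it is genuine: you try to bound $\deg(f^s:U'\to V')$ by a universal constant $N_0$, and you correctly notice that the obstacle is the possibility that $f^j(U)$, $0<j<s$, enters neighborhoods of other critical points many times. But the fix you propose---``bound the total number of events by a universal constant using the large derivative hypothesis''---cannot be carried out. The $LD$ hypothesis constrains $|Df^n(f(c))|$ along the orbits of \emph{critical values}; it does not give any universal bound on how many times the orbit of an arbitrary return domain $U$ can pass through $\bigcup_{c}\cV_c$ before its first return to $V$. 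In fact $\deg(f^s:U_0'\to V')$ really can be unbounded, so any proof whose last line is ``hence $\deg(f^s:U'\to V')\le N_0$'' is doomed.

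The paper circumvents exactly this by \emph{not} attempting a global degree bound. It distinguishes two cases by counting, for each $c\in\Crit'(f)$, the number of indices $j<s$ with $c\in U_j'$ (where $U_j'=\comp_{f^j(U)} f^{-(s-j)}(V')$). If every such count is $<5$, then $f^s:U_0'\to V'$ has universally bounded degree and one is done as in your argument. If some count is $\ge 5$, then there is a minimal $s'<s$ at which $f^{s'}(U)$ already lies in $\sL_{c'}^4(V')$ for some $c'\in\Back(c_0)\setminus\{c_0\}$. Minimality of $s'$ bounds the degree of the \emph{shorter} pullback $f^{s'}:W_0'\to\sL_{c'}(V')$ universally, and the $LD$ hypothesis is injected at this point via Lemma~\ref{lem:3deep}: since $\sL_{c'}(V')$ is $\rho'$-nice, one gets $\mod(\sL_{c'}(V');\sL_{c'}^4(V'))\ge 1$, hence $\mod(V;U)\ge\mod(W_0';U)$ is bounded below by pulling back \emph{this} annulus instead of $V'\setminus\overline V$. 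So the role of $LD$ is not to limit the number of critical passages (it can't), but to guarantee, via Lemma~\ref{lem:3deep}, that as soon as the orbit makes enough consecutive deep returns to one critical neighborhood, a modulus $\ge 1$ is available right there, making the remaining $s-s'$ iterates irrelevant. Your construction of the extended admissible pair $(\cV',\cV)$ and the appeal to Lemma~\ref{lem:admdeg} are in the right spirit but aimed at the wrong target; you would need to replace the attempted universal degree bound at time $s$ with the paper's ``stop at the first deep passage'' mechanism.
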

\begin{proof} Let $U$ be a return domain of $V$ with return time
$s$. We need to find a universal bound for $\mod(V;U)$. Let $U_j'$
be the component of $f^{-(s-j)}(V')$ which contains $f^j(U)$,
$j=0,1,\ldots,s$. Then $U_0'\subset V$ since $(V', V)$ is an
admissible pair. If for each $c\in \Crit'(f)$,
\begin{equation}\label{eqn:criticalhitting}
\#\{0\le j<s: U_j'\ni c\}<5,
\end{equation}
 then $f^s: U_0'\to V'$ has uniformly
bounded degree, hence $\mod(V; {U})$ is bounded from below by a
positive constant. So assume that (\ref{eqn:criticalhitting}) fails
for some $c\in\Crit'(f)$. Then clearly $c\in
\Back(c_0)\setminus\{c_0\}$ and there exists $1\le s_1<s$ such that
$U_{s_1}'\subset \sL_c^4(V')$. Therefore, there exists a minimal
integer $s'\in \{1,2,\ldots, s-1\}$ such that $U_{s'}\subset
\sL_{c'}^4 (V')$ for some $c'\in \Back(c_0)\setminus\{c_0\}$. Let
$W_j'$ be the component of $f^{-(s'-j)} (\sL_{c'}(V'))$ which
contains $f^j(U)$. By the minimality of $s'$, we have that for each
$c\in\Crit'(f)$,
$$\#\{0\le j<s': W_j'\ni c\}<4.$$
Thus $f^{s'}: W_0'\to \sL_{c'}(V')$ has uniformly bounded degree.
Since $V'$ is $\rho$-nice, $\sL_{c'}(V')$ is $\rho'$-nice, where
$\rho'>0$ is a constant depending only on $\rho$. By
Lemma~\ref{lem:3deep}, if $f$ satisfies $LD(K_3)$ with
$K_3=K_3(\rho)=K_2(\rho')$, then
$$\mod(\sL_{c'}(V'); {f^{s'}(U)})\ge
\mod(\sL_{c'}(V'); {\sL_{c'}^4(V')})\ge 1.$$ It follows that
$\mod(V; {U})\ge \mod(W_0'; U)$ is bounded from below by a positive
constant.
\end{proof}

\begin{lemma}\label{lem:recursive}
Given any $\rho>0$ and $M>0$ there exist constants $\hK=\hK(\rho,
M)>0$ and $\kappa\in (0,1)$ such that if $f$ satisfies $LD(\hK)$,
then the following holds. Given a $(\rho, M)$-bounded puzzle piece
$\hV$ which contains $c_0\in\Crit'(f)$ and such that $\diam(\hV)$ is
small enough, there exists a $(\rho_1, M_1)$-bounded puzzle piece
$\hV_1\ni c_0$ such that
$$\kappa \diam (f(\hV))\le \diam (f(\hV_1))\le \frac{1}{e} \diam (f(\hV)),$$
where $\rho_1>0, M_1>1$ are universal constants (independent of
$\rho, M$).
\end{lemma}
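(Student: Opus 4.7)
The plan is a two-stage filling construction inside $\hV$: first we produce an intermediate piece $V$ that enjoys \emph{universal} niceness, and then we select $\hV_1 \subset V$ so that $\diam(f(\hV_1))$ falls in the prescribed window.

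\textit{Step 1.} Choose $\lambda_1 > 0$ (depending on $\rho, M$) so that $V := \hV[c_0, \lambda_1]$ satisfies $\mod(\hV; V) \ge 1$. The pair $(\hV, V)$ is admissible by the very definition of the filling, so Lemma~\ref{lem:out2innbounds} yields that $V$ is $\rho_*$-nice for a \emph{universal} constant $\rho_*$. The Koebe distortion theorem applied to $\hV$ (which has shape $\le M$) gives $\diam(V) \asymp_{\rho, M} \diam(\hV)$, and hence $\diam(f(V)) \asymp_{\rho, M} \diam(f(\hV))$. Assuming $\hK \ge K_1(\rho_*)$, Lemma~\ref{lem:deepV1} applies to $V$, classifying each return domain $U$ of $V$ as either (a) contained in $\widetilde{B}(c_0, C K^{-1} \diam(f(V)))$, or (b) satisfying $\diam(U) \le C \dist(U, c_0)$.

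\textit{Step 2.} Consider the one-parameter family $W_\mu := V[c_0, \mu]$ and the monotone function $d(\mu) := \diam(f(W_\mu))$. Its only discontinuities are upward jumps, each caused by the absorption of a single return domain of $V$ when $\mu$ crosses a critical value. Set
$$\mu_* := \sup\bigl\{\mu > 0 :\, d(\mu) \le e^{-1} \diam(f(\hV))\bigr\},$$
and define $\hV_1 := W_{\mu_*^-}$ (the filling just before the potential jump at $\mu_*$). The upper bound $\diam(f(\hV_1)) \le e^{-1} \diam(f(\hV))$ holds by construction. For the lower bound, the jump at $\mu_*$ equals $\diam(f(D))$ for some return domain $D$ of $V$; the dichotomy from Step~1, combined with Koebe applied to $f|_{\hV}$ (using $f(z) - f(c_0) \asymp (z - c_0)^{\ell_{c_0}}$ near $c_0$), bounds this jump by $(2e)^{-1} \diam(f(\hV))$ provided $\hK$ is taken large enough in terms of $\rho, M$. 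This yields $\diam(f(\hV_1)) \ge \kappa \diam(f(\hV))$ with $\kappa = (2e)^{-1}$.

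\textit{Step 3.} Since $V$ is universally $\rho_*$-nice and $\hV_1 = V[c_0, \mu]$ for some $\mu \le \mu_*$, Lemma~\ref{lem:filling} delivers universal bounds $\rho_1, M_1$ on the niceness and shape of $\hV_1$, provided the parameter $\mu_*$ is bounded by a universal constant; this is ensured by treating the case $\diam(f(V)) \le e^{-1} \diam(f(\hV))$ separately (taking instead $\hV_1 := V[c_0, 1]$), while in the complementary case the very constraint $d(\mu_*) = e^{-1} \diam(f(\hV)) < \diam(f(V))$ forces $\mu_*$ to stay below a universal threshold. The main obstacle is the jump estimate of Step~2, particularly the type-(b) case: controlling $\diam(f(D))$ for a return domain $D$ of bounded Euclidean shape requires combining its extremal position (it must straddle $\partial W_{\mu_*^-}$) with the Koebe-controlled local expansion of $f$ at $c_0$.
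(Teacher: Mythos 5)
Your Step~1 has a genuine gap. You cannot, in general, choose $\lambda_1$ so that $\mod(\hV; \hV[c_0,\lambda_1])\ge 1$: by construction the filling $\hV[c_0,\lambda]$ always contains $\sL_{c_0}(\hV)$ whenever $c_0$ returns to $\hV$ (the ball $B_{\hV}(c_0,\lambda)$ contains $c_0$, which lies in that return domain), so
$$\mod\bigl(\hV;\hV[c_0,\lambda_1]\bigr)\le \mod\bigl(\hV;\sL_{c_0}(\hV)\bigr),$$
and the right-hand side is only bounded below by $\rho$, which may well be $<1$. Without $\mod(\hV;V)\ge 1$, Lemma~\ref{lem:out2innbounds} does not apply, and you never obtain the universal $\rho_*$-niceness on which the rest of the plan rests. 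Note that the $LD$ hypothesis is of no help here, since you have no control over the return domains of $\hV$ itself; Lemma~\ref{lem:deepV1} controls the return domains of $\hV[c_0,1]$, not of $\hV$.

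The paper resolves exactly this point with an extra intermediate stage. It first sets $V=\hV[c_0,1]$, which by Lemma~\ref{lem:filling} is only $\rho'(\rho)$-nice with shape bounded by a $\rho$-dependent constant $M'$ (no improvement to universal bounds yet). It then invokes Lemma~\ref{lem:deepV1} to obtain the dichotomy on the return domains of $V$ (small near $c_0$ because of $LD(K)$, or of Euclidean size comparable to their distance from $c_0$), and uses this to construct $\tV$ as the filling of an explicit Euclidean-ball neighborhood $\tB(c_0,\eps\diam(f(V)))$ together with the return domains of $V$ meeting it, with $\eps$ chosen so small (and $K>C/\eps$) that $\tV$ stays inside a comparably small ball, giving $\mod(V;\tV)\ge 1$. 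Only at this point is Lemma~\ref{lem:out2innbounds} applied, to the pair $(V,\tV)$, yielding universal $\rho_*$-niceness of $\tV$; then $\hV_1:=\tV[c_0,1]$ is $(\rho_1,M_1)$-bounded by Lemma~\ref{lem:filling}, and the diameter estimates drop out of the choice of $\eps$. Your Step~2 scanning over $W_\mu=V[c_0,\mu]$ to hit a target diameter is a genuinely different device, but it both depends on the broken Step~1 and would still require you to prove the jump-size bound you flag as ``the main obstacle''; the paper sidesteps that by fixing the scale of the Euclidean ball in advance rather than solving for it. If you want to salvage the scanning idea, you should at minimum replace Step~1 by $V=\hV[c_0,1]$ and derive the needed $\ge 1$ modulus from Lemma~\ref{lem:deepV1}(i), but you would then essentially be reconstructing the paper's $\tV$ anyway.
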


\begin{proof} Assume that $f$ satisfies $LD(K)$ with a large constant $K$. By Lemma~\ref{lem:deepV1},
each return domain $U$ of $V:=\hV[c_0, 1]$ satisfies one of the
following: $U\subset \tB(c_0, CK^{-1}\diam (f(V)))$ or $\diam (U)\le
C\dist(U, c_0)$, where $C=C(\rho, M)$ is a constant. By
Lemma~\ref{lem:filling}, $\Shape (V, c_0)$, hence $\Shape (f(V),
f(c_0))$ is bounded from above by a constant $M'$ depending on
$\rho$ and $\ell_{c_0}$. Let $\eps= (2M'(C+2)e^{2\pi
\ell_{c_0}})^{-1}$ and assume that $K> C/\eps$. Let $\tV$ be the
filling of the union of $\tB(c_0, \eps \diam(f(V)))$ and all the
return domain of $V$ that intersect $\tB(c_0, \eps \diam(f(V)))$.
Then, $\tV\subset \tB(c_0, (2M'e^{2\pi
\ell_{c_0}})^{-1}\diam(f(V)))$, hence $\mod(V; \tV)\ge 1$. By
Lemma~\ref{lem:out2innbounds}, we obtain that $\tV$ is
$\rho_*$-nice. Take $\hV_1=\tV[c_0, 1]$. By Lemma~\ref{lem:filling},
$\hV_1$ is $(\rho_1, M_1)$-bounded for some universal constants
$\rho_1$ and $M_1$. The estimate on the diameter of $\diam (f\hV_1)$
follows from the construction.
\end{proof}

\begin{lemma}\label{lem:nicepuzzle}
There exist universal constants $\hK_*>0$ and $\hat{\kappa}_*>0$ such
that if $f$ satisfies $LD(\hK_*)$, then for each $c_0\in \Crit'(f)$,
there exists an infinite sequence of $(\rho_1, M_1)$-bounded nice
topological disks
$$\hV_1\supset \hV_2\supset \cdots$$ that contain $c_0$ such that for each
$k\ge 1$,
$$\hat{\kappa}_*\diam (f(\hV_k))\le \diam(f(\hV_{k+1}))\le e^{-1}\diam (f(\hV_k))$$
\end{lemma}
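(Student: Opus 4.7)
The strategy is simply to iterate Lemma~\ref{lem:recursive}, once the starting piece has been massaged into bounded shape. By Proposition~\ref{prop:aprioribounds}, for every $\eta>0$ we may choose a $\rho_0$-nice topological disk $W\ni c_0$ with $\diam(W)<\eta$. The piece $W$ is not yet known to have bounded shape, so I first apply the filling operation from Lemma~\ref{lem:filling} with $\lambda=1$: the set $W':=W[c_0,1]$ is then $(\rho',M_0)$-bounded for universal constants $\rho'=\rho'(\rho_0)$ and $M_0=M_0(\rho_0)$. This supplies an initial $(\rho', M_0)$-bounded puzzle piece which can be taken arbitrarily small.

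Next, I apply Lemma~\ref{lem:recursive} once with parameters $(\rho',M_0)$. Provided $f\in LD(\hK(\rho',M_0))$ and $\diam(W')$ is sufficiently small, this produces a $(\rho_1,M_1)$-bounded topological disk $\hV_1\ni c_0$, contained in $W'$ (nesting follows from the construction in the proof of Lemma~\ref{lem:recursive}, where the output is $\tV[c_0,1]\subset\tV\subset W'[c_0,1]\subset W'$), with
\[
\kappa(\rho',M_0)\,\diam(f(W'))\le \diam(f(\hV_1))\le e^{-1}\diam(f(W')).
\]
Now I iterate: once $\hV_k$ is $(\rho_1,M_1)$-bounded, Lemma~\ref{lem:recursive} applied with the \emph{universal} parameters $(\rho_1,M_1)$ returns a $(\rho_1,M_1)$-bounded piece $\hV_{k+1}\subset\hV_k$ with
\[
\kappa(\rho_1,M_1)\,\diam(f(\hV_k))\le \diam(f(\hV_{k+1}))\le e^{-1}\diam(f(\hV_k)).
\]
Setting $\hK_*:=\max\{\hK(\rho',M_0),\hK(\rho_1,M_1)\}$ and $\hat{\kappa}_*:=\kappa(\rho_1,M_1)$, both universal, delivers the required sequence.

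The smallness hypothesis of Lemma~\ref{lem:recursive} is maintained automatically throughout the induction, since the diameters contract by the factor $e^{-1}$ at each step; choosing the initial $W$ below the threshold needed by the first two applications suffices. The genuine substance of this lemma is already packaged into Lemma~\ref{lem:recursive}, so the only conceptual point to verify is that the output constants $(\rho_1,M_1)$ there are truly universal (independent of the input $(\rho,M)$): this is what makes the induction self-perpetuating at a fixed scale rather than forcing $\hK_*$ or $\hat{\kappa}_*$ to deteriorate with $k$. Once that is granted, the construction is a routine induction and there is no further obstacle.
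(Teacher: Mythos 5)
Your proposal is correct and follows essentially the same route as the paper: start from an arbitrarily small $\rho_0$-nice disk given by Proposition~\ref{prop:aprioribounds}, apply the filling construction of Lemma~\ref{lem:filling} to get a uniformly bounded starting piece, and then iterate Lemma~\ref{lem:recursive}, exploiting that its output constants $(\rho_1,M_1)$ are universal so the induction is self-sustaining.
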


\begin{proof} It suffices to prove existence of an arbitrarily small
$(\rho_1, M_1)$-bounded nice topological disk $\hV_0\ni c_0$ under
the assumption that $f$ satisfies $LD(K)$ with a large $K$, since
then we may apply Lemma~\ref{lem:recursive} successively to obtain
the desired sequence.

 By Proposition~\ref{prop:aprioribounds}, there
exists an arbitrarily small $\rho_0$-nice topological disk $V\ni
c_0$. Let $\hV=V[c_0, 1]$. By Lemma~\ref{lem:filling}, $\hV$ is
uniformly bounded, so by Lemma~\ref{lem:recursive}, we obtain
existence of $\hV_0$.
\end{proof}

\begin{proof}[Proof of Proposition~\ref{prop:nicenets}] Let $K_*=\max(\hK_*, K_1(\rho_1))$.
Assume that $f$ satisfies $LD(K)$ with $K\ge K_*$. Let $\hV_k$ be as
in Lemma~\ref{lem:nicepuzzle} and let $V_k=\hV_k [c_0, 1]$. Then the
first, second  and third statements hold with suitable choices of
$C_*$ and $\kappa_*$. By Lemma~\ref{lem:filling}, $\Shape
(V_k,c_0)$, hence $\Shape (f(V_k), f(c_0))$, are uniformly bounded.
By Lemma~\ref{lem:deepV1}, the last statement holds.
\end{proof}

\subsection{Proof of Theorem~\ref{thm:ld2bc}}

In~\cite[Section 6]{R}, the author introduced another notion called
{\em univalent pull back condition}, which is closely related to
backward contraction. (A similar notion, $BC^*(r)$, was used in
\cite{BRSS}.)

Given $\delta'>\delta>0$, we say that $f$ satisfies the
$(\delta,\delta')$-univalent pull back condition if for every
$z\in\C$ and every integer $n\ge 1$ such that
\begin{itemize}
\item for each $j=1,2,\ldots, n-1$, $f^j(z)\not\in \bigcup_{c\in\Crit'(f)}\tB(c, \delta)$
\item for some $c\in\Crit'(f)$, we have $f^n(z)\in \tB(c, \delta')$,
\end{itemize}
then $f^n$ maps a neighborhood of $z$ conformally onto $\tB(c,
\delta')$. Given $r>1$, we say that $f$ satisfies the univalent pull
back condition with constant $r$ if for all $\delta>0$ sufficiently
small, $f$ satisfies the $(\delta, r \delta)$-univalent pull back
condition.

The following is \cite[Proposition 6.1 part 2]{R}.
\begin{lemma}\label{lem:up2bc}
There exists a constant $r_0>1$ such that if $f$ satisfies the
univalent pull back condition with constant $rr_0$, where $r>1$ is a
constant, then $f$ satisfies the backward contracting condition with
constant $r$.
\end{lemma}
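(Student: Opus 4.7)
The plan is to use the univalent pull back hypothesis to lift the pullback of $\widetilde{B}(c,r\delta)$ to a univalent pullback of the enlarged disk $V':=\widetilde{B}(c,rr_0\delta)$, and then derive the diameter bound by combining Koebe's distortion theorem with the definite modulus separating $V:=\widetilde{B}(c,r\delta)$ from $V'$.

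Fix $\delta$ small, $c\in\Crit'(f)$, $n\ge 1$, and $W$ a component of $f^{-n}(V)$ with $\dist(W,CV(f))\le\delta$, and let $W'$ be the component of $f^{-n}(V')$ containing $W$. The first task is to establish that $f^n|_{W'}:W'\to V'$ is biholomorphic; I prove this by induction on $n$. Pick $w_0\in W$ at distance at most $\delta$ from a critical value. If the orbit $w_0,f(w_0),\dots,f^{n-1}(w_0)$ is disjoint from $\bigcup_{c'\in\Crit'(f)}\widetilde{B}(c',\delta)$, the univalent pull back hypothesis applied at $w_0$ delivers a neighborhood of $w_0$ on which $f^n$ is biholomorphic onto $V'$, and this neighborhood must coincide with $W'$. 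Otherwise, let $j\in\{1,\dots,n-1\}$ be the first intermediate time with $f^j(w_0)\in\widetilde{B}(c'',\delta)$ for some $c''\in\Crit'(f)$; then $f^{j+1}(w_0)\in B(f(c''),\delta)$ lies within $\delta$ of a critical value, so by the inductive hypothesis applied to the component of $f^{-(n-j-1)}(V)$ containing $f^{j+1}(W)$, that component has diameter less than $\delta$. Combined with the univalent pull back condition on the initial segment of length $j+1<n$ near $w_0$, this reduces the problem to the shorter-iterate case.

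The geometric estimate proceeds by computing the modulus. Since $f$ restricts to an $\ell_c$-to-one branched cover from $V'\setminus\overline V$ onto the round annulus $B(f(c),rr_0\delta)\setminus\overline{B(f(c),r\delta)}$,
\[
\mu:=\mod(V'\setminus\overline V)\;\ge\;\frac{\log r_0}{2\pi\,\ell_{\max}(f)}.
\]
Conformal invariance under the univalent $f^n|_{W'}$ preserves this modulus, so $\mod(W'\setminus\overline W)\ge\mu$. Applying Koebe's distortion theorem to the univalent inverse $(f^n|_{W'})^{-1}:V'\to W'$ on the compactly contained $V$ gives $\diam(W)/\diam(W')\le C_1\,r_0^{-1/\ell_{\max}(f)}$ for a universal $C_1>0$. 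Combining this with an a priori bound $\diam(W')\le C_2\delta$ (obtained by invoking the univalent pull back hypothesis one step further back, at a preimage $\tilde w_0\in\widetilde{B}(c_0',\delta)$ of $w_0$ where $c_0'$ is a critical point with $f(c_0')$ the critical value closest to $w_0$, and using that $\diam(\widetilde{B}(c_0',\delta))\lesssim\delta^{1/\ell_{c_0'}}$), and choosing $r_0$ large enough that $C_1C_2\,r_0^{-1/\ell_{\max}(f)}<1$, yields $\diam(W)<\delta$.

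The main obstacle is managing the inductive step cleanly: both the univalence of $f^n|_{W'}$ and the absolute bound on $\diam(W')$ require invoking the univalent pull back hypothesis at points whose orbits must be verified to avoid critical $\delta$-neighborhoods at intermediate times, and these verifications interlock in a way that demands careful bookkeeping to ensure that the constants produced at each step depend only on $r_0$ (and not on $n$ or the specific component $W$), so that no circular dependence arises between the univalence claim and the diameter bound.
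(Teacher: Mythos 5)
The paper does not prove Lemma~\ref{lem:up2bc}: it simply cites \cite[Proposition 6.1, part 2]{R} and moves on. So there is no ``paper's proof'' to compare against; what matters is whether your argument, taken on its own, is sound.

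Your overall strategy --- pull back the enlarged disk $V'=\tB(c,rr_0\delta)$ univalently via the univalent pull back hypothesis, then use the definite modulus of the annulus $W'\setminus\overline{W}$ to contract --- is the right idea. But the step that closes the argument has a genuine gap. You deduce $\diam(W)<\delta$ from $\diam(W)\le C_1 r_0^{-1/\ell_{\max}}\diam(W')$ together with an asserted a priori bound $\diam(W')\le C_2\delta$. That last bound is false in general: one can have $\mod(W'\setminus\overline W)\ge\mu$ large and $\dist(W,\partial W')\le\delta$ while $\diam(W')$ stays of order $1$ (take $W'$ a fixed disk and $W$ a tiny disk very close to $\partial W'$). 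Nothing in the univalent pull back hypothesis forces $W'$ to be small --- $f^n\colon W'\to V'$ being conformal gives no absolute size control on $W'$, only a modulus. The parenthetical justification you give (applying the univalent pull back condition one step earlier at $\tilde w_0\in\tB(c_0',\delta)$) does not repair this: first, the condition requires $f^j(\tilde w_0)\notin\bigcup_{c'}\tB(c',\delta)$ for $j=1,\dots,n$, and $f(\tilde w_0)=w_0$ need not avoid critical neighborhoods; second, even if $f^{n+1}$ were univalent on a neighborhood $\tilde W''$ of $\tilde w_0$, that gives no bound on $\diam(W')=\diam(f(\tilde W''))$. The fix is to bypass $\diam(W')$ entirely: the closest critical value $v$ to $W$ satisfies $\dist(W,v)\le\delta$, and when $v\notin W'$ this forces $\dist(W,\partial W')\le\delta$; the Teichm\"uller/Gr\"otzsch modulus estimate then bounds $\diam(W)$ by $\eps(\mu)\,\dist(W,\partial W')\le\eps(\mu)\delta$ directly, with $\eps(\mu)\to 0$ as $\mu\to\infty$. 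You would still need a separate argument for the case $v\in W'$ (e.g.\ pulling back once more through the critical point $\hat c$ with $f(\hat c)=v$), and the inductive bookkeeping you flag at the end --- ensuring the univalence claim and the shorter-iterate diameter bounds do not depend circularly --- is also left unresolved. As written, the proof does not go through.
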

\begin{proof}[Proof of Theorem~\ref{thm:ld2bc}]
Let $K_0=\max (K_*, 2C_*^2 \kappa_*^{-1} r_0)$, where $K_*>0,$
$C_*>0$ and $\kappa_*>0$ are as in Proposition~\ref{prop:nicenets}.

Fix $r>1$ and assume that $f$ satisfies $LD(K)$ with $K=rK_0$. Let
us prove that $f$ satisfies the univalent pull back condition with
constant $rr_0$ which implies that $f$ satisfies the backward
contraction condition with constant $r$ by Lemma~\ref{lem:up2bc}. To
this end, it suffices to prove that for each $c_0\in\Crit'(f)$ and
each $\delta>0$ small enough, if $U$ is a pull back of
$\tB(c_0,\delta)$ that intersects $\Crit'(f)$, then $\diam (f(U))\le
(rr_0)^{-1}\delta$. Let $V_k$ be as given in
proposition~\ref{prop:nicenets}. For each $\delta\in (0,
(2C_*)^{-1}\diam (V_1))$, there exists a maximal integer $k\ge 1$
such that $\tB(c_0, \delta)\subset V_k$. By part 2 and 3 of the
proposition~\ref{prop:nicenets}, we have
$$\diam (f(V_k))\le 2C_* \kappa_*^{-1}\delta.$$ If $U$ is a pull back of
$\tB(c_0,\delta)$ that contains a critical point $c$, then $U\subset
\sL_c(V_k)$. Thus by part 4 of the proposition~\ref{prop:nicenets},
we obtain
\begin{align*}
\diam (f(U)) &\le \diam (f(\sL_c(V_k))) && \le C_* K ^{-1} \diam
(f(V_k))\\ & \le 2C_*^2 \kappa_*^{-1} K^{-1}\delta && \le
(rr_0)^{-1}\delta.
\end{align*}
The proof is completed.
\end{proof}

\section{Backward contraction implies large derivatives}\label{sec:bc2ld}
In this section, we shall prove Theorem~\ref{thm:bc2ld}. Let $f$ be
a polynomial which satisfies the backward contraction condition with
constant $r>4$. So there exists $\delta_0>0$ such that for each
$\delta\in (0,\delta_0]$ and each $c, c'\in \Crit'(f)$, if $U\ni c$
is a component of $f^{-n}(\tB(c',r\delta))$ then $\diam
(f(U))<\delta.$ We continue to use $\ell_{\max}$ to denote the
maximal order of critical points in the Julia set.

 For each $n\ge 0$ and $c\in\Crit'(f)$, let
$\delta_n=2^{-n}\delta_0$, $V_n^c=\tB(c, \delta_n)$ and let
$V_n=\bigcup_{c\in\Crit'(f)} V_n^c$.

By reducing $\delta_0$ if necessary, we may assume that for each
$x\in V_{n}\setminus V_{n+2}$, and $n\ge 0$,
\begin{equation}\label{eqn:der1}
\kappa_0^{-1} \diam (f(V_n))\ge \diam (V_n)|Df(x)|\ge \kappa_0\diam
(f(V_n)),
\end{equation}
and for each $c\in\Crit'(f)$ and $0<\delta<\delta'< r\delta_0$,
\begin{equation}
\kappa_0^{-1}\left(\frac{\delta'}{\delta}\right)^{1/\ell_c}\ge
\frac{\diam (\tB(c, \delta'))}{\diam (\tB(c,\delta))}\ge
\kappa_0\left(\frac{\delta'}{\delta}\right)^{1/\ell_c}
\end{equation}
where $\kappa_0$ is a constant depending only on $\ell_{\max}$.

\begin{lemma}\label{lem:crtret}
For each $n\ge 1$ and $c\in\Crit'(f)$, if $s$ is a positive integer
with $f^s(c)\in V_n\setminus V_{n+1}$ and with $f^j(c)\not\in
V_{n+1}$ for all $1\le j<s$, then
$$|Df^{s}(f(c))|\ge \kappa r,$$
where $\kappa>0$ is a constant depending only on $\ell_{\max}(f)$.
\end{lemma}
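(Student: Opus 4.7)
Let $c'' \in \Crit'(f)$ be the critical point for which $f^s(c) \in V_n^{c''} \setminus V_{n+1}^{c''}$. My strategy is to pull back, via $f^s$ starting at $c$, the ball $B := V_n^{c''} = \tB(c'', \delta_n)$ and the slightly enlarged ball $B^+ := \tB(c'', 2\delta_n) = V_{n-1}^{c''}$ (which is available since $n \geq 1$); use backward contraction, together with the hypothesis $f^j(c) \notin V_{n+1}$ for $1 \leq j < s$, to force the pull-back of $f^{s-1}$ over $B^+$ to be conformal; and then combine Koebe distortion with the pointwise bound~(\ref{eqn:der1}) at $f^s(c)$. For $0 \leq j \leq s$, let $U_j$ (respectively $U_j^+$) denote the component of $f^{-(s-j)}(B)$ (respectively $f^{-(s-j)}(B^+)$) that contains $f^j(c)$, so that $U_0 \ni c$.

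\textbf{Univalence of the inverse branch.} Writing $B = \tB(c'', r\cdot \delta_n/r)$ and $B^+ = \tB(c'', r\cdot 2\delta_n/r)$, backward contraction applied at $c$ gives $\diam(f(U_0)) < \delta_n/r$, that is $\diam(U_1) < \delta_n/r$, and similarly $\diam(U_1^+) < 2\delta_n/r$. I next rule out critical points in the intermediate pull-backs. If some $c''' \in \Crit'(f)$ lay in $U_j^+$ for some $1 \leq j \leq s-1$, then applying $BC(r)$ at $c'''$ to the component $U_j^+$ of $f^{-(s-j)}(B^+)$ would yield $\diam(f(U_j^+)) < 2\delta_n/r < \delta_{n+1}$, where the last inequality uses $r > 4$. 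Hence $f(U_j^+) \subset B(f(c'''), \delta_{n+1})$, so by connectedness of $U_j^+$ together with $c''' \in U_j^+$ one gets $U_j^+ \subset \tB(c''', \delta_{n+1}) = V_{n+1}^{c'''} \subset V_{n+1}$, contradicting $f^j(c) \in U_j^+$ and $f^j(c) \notin V_{n+1}$. Therefore $f^{s-1}: U_1^+ \to B^+$ is a conformal isomorphism.

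\textbf{Koebe and conclusion.} The annulus $B^+ \setminus \overline{B}$ has modulus at least $(\log 2)/\ell_{\max}$: the restriction of $f$ (of local degree $\ell_{c''} \leq \ell_{\max}$ at $c''$) maps it onto the round annulus $B(f(c''), 2\delta_n) \setminus \overline{B(f(c''), \delta_n)}$ of modulus $\log 2$. Pulling this modulus back through the conformal map $f^{s-1}: U_1^+ \to B^+$, standard Koebe distortion furnishes a constant $D = D(\ell_{\max}) \geq 1$ with
\[
|Df^{s-1}(f(c))| \geq D^{-1}\,\frac{\diam(B)}{\diam(U_1)} \geq D^{-1}\,r\,\frac{\diam(V_n^{c''})}{\delta_n}.
\]
Since $f^s(c) \in V_n^{c''} \setminus V_{n+1}^{c''} \subset V_n \setminus V_{n+2}$, the estimate~(\ref{eqn:der1}) gives $|Df(f^s(c))| \geq 2\kappa_0\,\delta_n/\diam(V_n^{c''})$. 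The chain rule then yields $|Df^s(f(c))| = |Df(f^s(c))|\cdot |Df^{s-1}(f(c))| \geq (2\kappa_0/D)\,r$, so $\kappa := 2\kappa_0/D$ works.

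\textbf{Main obstacle.} The crux is the univalence step, which requires translating the backward-contraction diameter bound $\diam(f(U_j^+)) < 2\delta_n/r$ into the geometric inclusion $U_j^+ \subset V_{n+1}^{c'''}$. This relies on the fact that $\tB(c''', \delta_{n+1})$ is by definition the component of $f^{-1}(B(f(c'''), \delta_{n+1}))$ that contains $c'''$, on connectedness of $U_j^+$, and on the strict inequality $r > 4$ needed to compare $2\delta_n/r$ with $\delta_{n+1}$. Once univalence has been established, the Koebe distortion estimate with annular-modulus input and the pointwise bound~(\ref{eqn:der1}) combine in a routine manner.
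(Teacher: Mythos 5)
Your proof is correct and takes essentially the same route as the paper: pull back the enlarged ball $\tB(c'',\delta_{n-1})$ (your $B^+$) along the orbit, use backward contraction with $r>4$ to rule out critical points in the intermediate pullbacks $U_j^+$ (hence univalence of $f^{s-1}$), and combine the resulting distortion estimate with~(\ref{eqn:der1}) at $f^s(c)$. The only cosmetic difference is the distortion step, where the paper invokes the Schwarz lemma directly (observing that $f^s(c)\in V_n^{c'}$ lies a definite hyperbolic distance inside $\tB(c',\delta_{n-1})$, so it is the center of a ball of comparable size in that pullback), while you use Koebe distortion on the nested pair $B\Subset B^+$; both yield the same estimate.
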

\begin{proof}

Let $c'\in \Crit'(f)$ be such that $f^{s}(c)\in V_n^{c'}$. For each
$j=0,1,\ldots, s$, let $U_j$ be the connected component of
$f^{-(s-j)}(\tB(c',\delta_{n-1}))$ which contains $f^j(c)$. Since
$f^j(c)\not\in V_{n+1}$ for all $1\le j<s$, and since $f$ is
backward contracting with constant $r>4$, we have $\diam (U_1)\le
\delta_{n-1}/r$, and $U_j\cap \Crit'(f)=\emptyset$ for all
$j=1,2,\ldots, s-1$. Thus $f^{s-1}: U_1\to U_{s}$ is conformal.
Since $f^{s}(c)\in V_n^{c'}$, there is a constant $\tau>0$ depending
on the order of $c'$ such that
$$\tB(c',\delta_{n-1})\supset B(f^s(c), \tau \diam
(\tB(c',\delta_{n-1})).$$ Applying the Schwarz lemma to the inverse
of $f^{s-1}: U_1\to U_{s}$, we obtain
\begin{align*}
|Df^{s}(f(c))|=|Df^{s-1}(f(c))||Df(f^s(c))|\ge \tau \frac{\diam
(\tB(c', \delta_{n-1}))}{\diam (U_1)}|Df(f^{s}(c))|,
\end{align*}
which implies by (\ref{eqn:der1}) that $|Df^s(f(c))|\ge
2\tau\kappa_0 r.$ Defining $\kappa=2\tau\kappa_0$ completes the
proof.
\end{proof}

\begin{lemma}\label{lem:shiftret} There exists $r_*>4$ depending only on $\ell_{\max}(f)$ such that
if $f$ satisfies $BC(r_*)$ then the following holds.  Let $x\in
V_{n+1}$ and let $s$ be a positive integer with $f^s(x)\in
V_n\setminus V_{n+1}$ and with $f^j(x)\not\in V_{n+1}$ for all
$j=1,2,\ldots, s-1$. Then $|Df^s(f(x))|\ge 1$.
\end{lemma}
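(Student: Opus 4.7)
The plan is to mimic closely the proof of Lemma~\ref{lem:crtret}, with $x$ playing the role that a critical point $c$ plays there. The essential new observation is that although $f(x)$ is not itself a critical value, the hypothesis $x\in V_{n+1}$ places $f(x)$ within Euclidean distance $\delta_{n+1}$ of one: choosing $c\in\Crit'(f)$ with $x\in V_{n+1}^c=\tB(c,\delta_{n+1})$, the very definition of $\tB$ gives $|f(x)-f(c)|<\delta_{n+1}$, and $f(c)\in CV(f)$. This is exactly what lets the backward contraction property be invoked as if we were starting from a critical point.

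More precisely, let $c'\in\Crit'(f)$ be such that $f^s(x)\in V_n^{c'}$, and for $j=0,1,\ldots,s$ let $U_j$ denote the component of $f^{-(s-j)}(\tB(c',r_*\delta_{n+1}))$ containing $f^j(x)$; since $r_*>4>2$, the outer disk $\tB(c',r_*\delta_{n+1})$ contains $V_n^{c'}\ni f^s(x)$. Applying the backward contraction property (with parameter $\delta=\delta_{n+1}$) to $U_1$, which satisfies $\dist(U_1,CV(f))\le|f(x)-f(c)|<\delta_{n+1}$, I would first conclude $\diam(U_1)<\delta_{n+1}$. An inductive reapplication of BC along the orbit (exactly as in Lemma~\ref{lem:crtret}) then yields $\diam(U_j)<\delta_{n+1}$ and $U_j\cap\Crit'(f)=\emptyset$ for every $j=1,\ldots,s-1$: the critical-free claim at each stage holds because any $\hat c\in U_j\cap\Crit'(f)$ would force $|f^j(x)-\hat c|<\delta_{n+1}$, which contradicts $f^j(x)\notin V_{n+1}$ since the inradius of $V_{n+1}^{\hat c}$ is comparable to $\delta_{n+1}^{1/\ell_{\hat c}}\gg\delta_{n+1}$ whenever $\delta_0$ is sufficiently small.

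Once $f^{s-1}\colon U_1\to U_s$ is known to be a biholomorphism between simply connected domains, Koebe's distortion theorem at the pair $(f(x),f^s(x))$ gives
\[
|Df^{s-1}(f(x))|\gtrsim\frac{d(f^s(x),\partial U_s)}{d(f(x),\partial U_1)}.
\]
The denominator is at most $\diam(U_1)<\delta_{n+1}$, while for the numerator I would exploit the local branched-cover structure of $f$ at $c'$ (of order $\ell_{c'}$): since $f(U_s)=B(f(c'),r_*\delta_{n+1})$ and $f(f^s(x))\in B(f(c'),\delta_n)$, one obtains $d(f^s(x),\partial U_s)\gtrsim\bigl(r_*^{1/\ell_{c'}}-2^{1/\ell_{c'}}\bigr)\delta_{n+1}^{1/\ell_{c'}}$. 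Combined with the estimate $|Df(f^s(x))|\gtrsim\delta_{n+1}^{1-1/\ell_{c'}}$ supplied by~(\ref{eqn:der1}) (valid because $f^s(x)\in V_n\setminus V_{n+1}\subset V_n\setminus V_{n+2}$), the powers of $\delta_{n+1}$ cancel and
\[
|Df^s(f(x))|=|Df^{s-1}(f(x))|\cdot|Df(f^s(x))|\gtrsim r_*^{1/\ell_{c'}}-2^{1/\ell_{c'}},
\]
which exceeds $1$ once $r_*$ is taken large enough in terms of $\ell_{\max}$ and the absolute Koebe and derivative constants. The main obstacle will be the inductive critical-free step for the $U_j$'s: it requires careful reapplication of BC after each potential encounter with a critical value, and mirrors the corresponding (essentially asserted) step in Lemma~\ref{lem:crtret}.
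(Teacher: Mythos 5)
Your proposal follows the paper's proof essentially line by line: both pull back $\tB(c',r_*\delta_{n+1})$ along the orbit, use the key observation that $x\in V_{n+1}^c$ places $f(x)$ within $\delta_{n+1}$ of $f(c)\in CV(f)$ so that backward contraction yields $\diam(U_1)<\delta_{n+1}$, verify that $f^{s-1}\colon U_1\to U_s$ is univalent, and combine a Schwarz/Koebe distortion estimate with the $\ell_{c'}$-branched local behavior of $f$ at $c'$ (captured by~(\ref{eqn:der1})--(2)) so that the powers of $\delta_{n+1}$ cancel and what remains is a positive power of $r_*$. The only cosmetic difference is your use of Koebe's boundary-distance form $|Df^{s-1}(f(x))|\gtrsim d(f^s(x),\partial U_s)/d(f(x),\partial U_1)$ where the paper applies the Schwarz lemma to the inverse together with the inner-ball estimate $U_s\supset B(f^s(x),\diam(U_s)/3)$; both the diameter bound on $U_1$ and the critical-free claim for the intermediate $U_j$ (which, as you correctly note, needs a careful maximal-index argument rather than the forward statement ``$\diam(U_j)<\delta_{n+1}$ for all $j$'', since BC applies to $U_{j+1}$ once a critical value is encountered, not to $U_j$ directly) are asserted in the same terse way in the paper's own proof of Lemma~\ref{lem:crtret}.
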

\begin{proof} Assume that $f$ satisfies $BC(r_*)$ with $r_*$ sufficiently large so that
$$\frac{4\kappa_0^2}{3} \left(\frac{r_*}{2}\right)^{\ell_{c'}}\ge 1.$$
Let $c, c'\in\Crit'(f)$ be such that $x\in V_{n+1}^c$ and $f^s(x)\in
V_n^{c'}$. For each $j=0,1,\ldots, s$, let $U_j$  be the connected
component of $f^{-(s-j)}(\tB(c',r_* \delta_{n+1}))$ which contains
$f^j(x)$. Since $f^j(x)\not\in\tB(\Crit'(f),\delta_{n+1})$ for all
$1\le j<s$, $f^{s-1}: U_1\to U_s$ is conformal. Moreover,
$\diam(U_1)\le \delta_{n+1}$. Provided that $r_*$ is large enough,
$U_s$ contains a ball centered at $f^s(x)$ and of radius at least
$\diam (U_s)/3$. By the Schwarz lemma, we have
\begin{align*}
  |Df^s(f(x))|
& =|Df^{s-1}(f(x))||Df(f^s(x))|\\
& \ge \frac{\diam (U_s)}{3\diam (U_1)}|Df(f^s(x))|\\
& =\frac{\diam (\tB(c', r_*\delta_{n+1}))}{\diam
(V_n^{c'})}\frac{\diam(V_n^{c'})|Df(f^s(x))|}{3\diam (U_1)}\\
&\ge \kappa_0^2 \left(\frac{r_*}{2}\right)^{\ell_{c'}}\frac{\diam
(fV_n^{c'})}{3\diam (U_1)}\\
& \ge  \frac{4\kappa_0^2}{3}
\left(\frac{r_*}{2}\right)^{\ell_{c'}}\ge 1.
\end{align*}
\end{proof}

\iffalse
\begin{lemma}\label{lem:noncrtret}
Provided that $r$ is large enough, the following holds. Assume that
$x\in V_n\setminus V_{n+1}$, $f^s(x)\in V_n$ and $f^j(x)\not\in
V_{n+1}$ for all $1\le j<s$  then
$$|Df^s(f(x))|\ge 1.$$
\end{lemma}
\begin{proof} Let $c, c'\in \Crit'(f)$ be such that $x\in V_n^c$ and
$f^s(x)\in V_n^{c'}$. By assumption, $f^j(x)\not\in \tB(\Crit'(f),
\delta_{n+1})$ for all $j=0,1,\ldots, s-1$. Since $f$ satisfies the
backward contraction condition with constant $r$, then $f^s$ maps a
neighborhood $Q$ of $x$ diffeomorphically onto $\tB(c', r
\delta_{n+1})$. Moreover, since $x\in V_n^c$, we have $Q\subset
\tB(c, 2\delta_n)$. Let $P$ be subset of $Q$ which is mapped onto
$\tB(c', r\delta_{n+1}/2)$ by $f^s$. Then $f^s: P\to V_n^c$ has
uniformly bounded distortion. So there is a universal constant $C>0$
such that
\begin{align*}
|Df^s(f(x))|& =|Df^s(x)|\frac{|Df(f^s(x))|}{|Df(x)|}\\
& \ge C\frac{\diam (V_n^{c'})|Df(f^s(x))|} {\diam
(V_n^c)|Df(x)|}\frac{\diam
(V_n^c)}{\diam(\tB(c,2\delta_n))}\frac{\diam (\tB(c',
r\delta_{n+1}/2))}{\diam (V_n^{c'})}\\
& \ge C\kappa_0^2 \left(\frac{1}{2}\right)^{1/\ell_c}
\left(\frac{r}{4}\right)^{1/\ell_{c'}}.
\end{align*}
Thus $|Df^s(f(x))|\ge 1$ provided that $r$ is large enough.
\end{proof}
\fi
\begin{proof}[Proof of Theorem~\ref{thm:bc2ld}]
Let $r_0=\max (r_*, \kappa^{-1})$, where $\kappa$ is as in
Lemma~\ref{lem:crtret} and $r_*$ is as in Lemma~\ref{lem:shiftret}.

Assume that $f$ satisfies $BC(Kr_0)$ with $K\ge 1$. We shall prove
that for any $S\ge 1$ and $c\in\Crit'(f)$ with $f^S(c)\in V_0$,
$|Df^S(f(c))|\ge K$.

Given $S$ and $c$ as above, let us define inductively non-negative
integers $n_1>n_2>\cdots>n_m$ and $S_1<S_2<\cdots<S_m=S$ as follows.
First,
$$n_1=\max\{n\ge 0: f^j(c)\in V_{n}\mbox{ for some }1\le j\le S\},$$
and $$S_1=\max\{s\le S: f^s(c)\in V_{n_1}\}.$$ (Observe that the
backward contracting assumption on $f$  implies that $f$ has no
critical relation, so $n_1$, and hence $S_1$ is well-defined.) If
$S_1=S$ then we stop. Otherwise, let
$$n_2=\max\{n\ge 0: f^j(c)\in V_{n}\mbox{ for some }S_1<j\le S\},$$ and
$$S_2=\max\{s: S_1<s\le S, f^s(c)\in V_{n_2}\}.$$ Repeating the argument, we must stop within
finitely many steps.

By Lemma~\ref{lem:crtret}, $|Df^{S_1}(f(c))|\ge \kappa Kr_0\ge K.$
By Lemma~\ref{lem:shiftret}, for each $i=1,2,\ldots, m-1$ we have
$$|Df^{S_{i+1}-S_i}(f^{S_i+1}(c))|\ge 1.$$
Thus
$$|Df^s(f(c))|=|Df^{S_1}(f(c))|\prod_{i=1}^{m-1}
|Df^{S_{i+1}-S_i}(f^{S_i+1}(c))|\ge K.$$
\end{proof}

\appendix

\section{A priori bounds}\label{sec:bounds}

The goal of this section is to prove
Proposition~\ref{prop:aprioribounds}. Besides the results of
~\cite{KS}, we shall also use some arguments in~\cite[Section
6]{KSS}. Throughout this section, assume that $f$ is a polynomial,
at most finitely renormalizable and having only hyperbolic periodic
points.

\subsection{The puzzle construction} We shall now describe a puzzle
partition which will provide nice topological disks as required. The
construction given below is a modification of that in~\cite[Section
2]{KS} and makes use of equipotential curves for the Green function
and all bounded periodic Fatou components, external rays and
internal rays. The modification is necessary since we use the usual
definition of renormalization rather than the one used in \cite{KS},
and since we want to have an arbitrarily small puzzle piece for each
$c\in\Crit'(f)$.

Let $G$ denote the Green function of $f$. A {\em smooth external
ray} is a smooth gradient line of $G$ which starts from infinity and
tends to the Julia set of $f$. A smooth external ray has a
well-defined angle $t\in\R/\Z$ at which the ray goes to $\infty$. An
{\em external ray} is either a smooth external ray, or else a limit
of such rays. So for each $t\in\R/\Z$ there is exactly one smooth
external rays $\cR^t$, or two (non-smooth) external rays $\cR^{t,+}$
and $\cR^{t,-}$. Let $P_{\bad}$ be the set of periodic points which
are  contained in the forward orbit of either a critical point or
the landing point of a non-smooth external ray. Note that $P_{\bad}$
is a finite set. Let $P_{\sep}$ be the set of periodic points which
are the common landing point of at least $2$ external rays. (Here
``sep'' stands for ``separable''.)

 \noindent
{\bf Observation 1.} {\em Let $K'$ be a periodic component of the
filled Julia set of $f$ that contains a critical point but no
attracting periodic points. Then $K'\cap P_{\sep}$ is an infinite
set.}

In fact by \cite{LP}, we only need to consider the case that $f$ has
a connected Julia set. So $\partial K'=J(f)$ and $f$ has only
repelling periodic points. Let $N_n$ (resp. $N_{n}'$) be the number
of periodic points (resp. external rays) of $f$ which has period
$n$. Then $N_1=N_1'+1$ and for all $n>1$, $N_n=N_n'$. Let us
construct a sequence of integers $s_1 = 1 < s_2 < ...$, such that
for each $j \ge 1$, $f$ has a periodic point $p_j$ of period $s_j$
which is not the landing point of an external ray of the same
period. Since $N_1>N_1'$, there exists a fixed point of $f$ which is
not the landing point of an external ray fixed by $f$. Suppose now
that $p_j$ and $s_j$ have been defined. Let $\gamma_j$ be an
external ray landing at $p_j$ and let $s_{j+1}$ be the period of
$\gamma_j$. Clearly, $s_{j+1}>s_j\ge 1$. Since
$N_{s_{j+1}}=N'_{s_{j+1}}$, $f$ has a periodic point $p_{j+1}$ of
period $s_{j+1}$ which is not the landing point of an external ray
of period $s_{j+1}$. This proves the existence of
$\{s_j\}_{j=1}^\infty$ and $\{p_j\}_{j=1}^\infty$. Since $p_j\in
P_{\sep}$ for all $j\ge 1$, the statement follows.

Let $\cK$ be the collection of all periodic components of the filled
Julia set of $f$ that contain a critical point. For each $K'\in\cK$,
choose $a_{K'}\in (K'\cap P_{\sep})\setminus P_{\bad}$ and let
$\Xi_{K'}$ be the union of the orbit of $a_{K'}$ and all external
rays landing on this orbit. Moreover, let
$$\Theta_1=\bigcup_{K'\in\cK} \Xi_{K'}.$$

Now we shall define for each bounded Fatou component $B$, an {\em
equipotential curve} $\Gamma_B$ and {\em internal rays}
$\gamma_B^\theta$, $\theta\in\R/\Z$. We start by choosing
 bounded periodic Fatou components $B_1,B_2,\ldots, B_m$, such that
the orbits of $B_i$'s are pairwise disjoint and such that the grand
orbit of $\bigcup_{i=1}^m B_i$ covers the interior of the filled
Julia set. By assumption, $f$ has an attracting periodic point
$p_i\in B_i$ with period $s_i\ge 1$. We choose Jordan curves
$\Gamma_{B_i}\subset B_i$ such that $B_i\setminus \Omega_i$ is
disjoint from the orbit of all critical points and such that
$\Gamma_{B_i}':=f^{-s_i}(\Gamma_{B_i})\cap B_i$ lies in the interior
of $B_i\setminus \Omega_i$, where $\Omega_i$ is the topological disk
bounded by $\Gamma_{B_i}$.  Then $\Gamma_{B_i}'$ is also a Jordan
curve which bounds a topological disk $\Omega_i'$. Denote by $d_i$
the degree of the map $f^{s_i}: B_i\to B_i$. Choose a diffeomorphism
$$h_{B_i}: B_i\setminus \Omega_i\to \{1<|z|\le 2^{d_i}\}$$ such that
\begin{itemize}
\item $h_{B_i}(\Gamma_{B_i})=\{|z|=2^{d_i}\}$ and $h_{B_i}(\Gamma_{B_i}')=\{|z|=2\}$;
\item $h_{B_i}(f^{s_i}(z))=h_{B_i}(z)^d$ holds for all $z\in B_i\setminus
\Omega_i'$.
\end{itemize}
For $\theta\in\R/\Z$, define $\gamma_{B_i}^\theta:=h_{B_i}^{-1}
(\{re^{2\pi i\theta}: 1<r\le 2^{d_i}\})$. Given a bounded Fatou
component $B$ there exists a unique $i\in\{1,2,\ldots, m\}$ and a
minimal non-negative integer $n_B\ge 0$ such that $f^{n_B}(B)=B_i$.
Let $\Gamma_{B}=f^{-n_B}(\Gamma_{B_i})\cap B$ which is also a Jordan
curve, and define internal rays
$$\gamma_{B}^{\theta}=f^{-(n_B-i)}(\gamma_{B_i}^{\theta})\cap B.$$

\noindent {\bf Observation 2.} {\em There exists $\theta_0\in\R/\Z$
which is periodic under the map $\theta\to d_i\theta$, such that
$\gamma_{B_i}^{\theta_0}$ converges to a periodic point in $\partial
B_i \setminus P_{\bad}$.}

In fact, as in~\cite[Lemma 2.1]{KS}, we can prove that for each
$\theta\in\R/\Z$ which is periodic under the map $\theta\to
d_i\theta$, $\gamma_{B_i}^{\theta}$ converges to a periodic point in
$\partial B_i$. Moreover, for two distinct $\theta$,
$\gamma_{B_i}^\theta$ converges to different points. Since
$P_{\bad}$ is finite, the statement follows.

For each $i=1,2,\ldots, m$, let us fix $\theta_i\in\R/\Z$ which is
periodic under $t\mapsto d_i t$ such that $\gamma_{B_i}^{\theta_i}$
converges to a periodic point $a_i\in \partial B_i\setminus
P_{\bad}$. Let
$$\Xi_i=\bigcup_{k=0}^{s_i-1} \left(\Gamma_{f^k(B_i)} \cup
\bigcup_{j=0}^\infty \gamma_{f^k(B_i)} ^{d_i^j \theta_i}\right),$$
let $\Xi_i'$ be the union of the orbit of $a_i$ and the external
rays landing on this orbit, and let
$$\Theta_2=\bigcup_{i=1}^m (\Xi_i\cup \Xi_i').$$

Now we are ready to construct the puzzle. Let $\eps>0$ be such that
the equipotential set $\Theta_0:=\{G(z)=\eps\}$ is disjoint from the
grand orbit of all critical points. Let $\Theta=\Theta_0\cup
\Theta_1\cup\Theta_2$. This is a finite union of smooth curves and
periodic points, and each of these periodic points is the landing
point of two or more smooth (external or internal) rays. Let
$\mathcal{P}_0$ be the collection of all components of $\C\setminus
\Theta$ which intersect the Julia set of $f$ and for $n\ge 1$, let
$\mathcal{P}_n$ be the collection of components of $f^{-n}(P)$,
where $P$ runs over all elements of $\mathcal{P}_0$. An element of
$\mathcal{P}_n$ is called a {\em puzzle piece of depth $n$}.

%Let $N_0$ be the maximal period of these periodic points. Let $K'$
%be a periodic component of the filled Julia set that contains a
%critical point.
%\begin{itemize}
%\item If $K'$ does not contain an attracting period point, then we choose
%a periodic point $a_{K'}\in K'$ with period greater than $N_0$ such
%that there are at least two external rays landing at $a_{K'}$. Let
%$\Gamma_{K'}$ be the union of the orbit of $a_{K'}$ and all external
%rays landing on this orbit.
%\item For each $i=1,2,\ldots, m$, choose $\theta_i\in\R/\Z$ such
%that it is periodic under $t\mapsto d_i t$ of period greater than
%$N_0$. Let
%$$\Gamma_{i}=\bigcup_{k=0}^{s_i-1} \bigcup_{j=0}^\infty \gamma_{f^k(B_i)}^{d^j\theta_i}$$
%\end{itemize}
Since $\Theta$ is disjoint from the orbit of critical points, for
each $c\in\Crit'(f)$ and each $n\ge 0$ there is a puzzle piece
$P_n(c)$ of depth $n$ that contains $c$. If for each $c\in\Crit'(f)$
and each integer $s\ge 1$, there exists $n\ge 0$ such that
$f^s(c)\not\in P_n(c)$, then the arguments in \cite{KS} show that
$\diam (P_n(c))\to 0$ as $n\to\infty$ for each $c\in\Crit'(f)$.

Otherwise, there exists $c\in\Crit(f)$ and a minimal integer $s\ge
1$ such that $f^s(c)\in P_n(c)$ for all $n\ge 0$. The degree of
$f^s: P_{n+s}(c)\to P_n(c)$ is non-increasing, hence eventually
constant. So there exists $n_0$ such that all the critical points of
$f^s: P_{n_0+s}(c)\to P_{n_0}(c)$ do not escape $P_{n_0+s}(c)$ under
forward iteration of this map. Using the ``thickening'' technique
\cite{Milnor}, we can find topological disks
$\widehat{P}_{n_0+s}(c)\supset P_{n_0+s}(c)$ and
$\widehat{P}_{n_0}(c)\supset P_{n_0}(c)$ such that $f^s:
\widehat{P}_{n_0+s}(c)\to\widehat{P}_{n_0}(c)$ is a renormalization
of $f$. Then arguing as in Observation 1, there exists a period
point $q\in P_{n_0+s}(c)\cap (P_{\sep}\setminus P_{\bad})$.  We add
the orbit of $q$ and the external rays landing on the orbit of $q$
in the set $\Theta$ and construct a new puzzle. Then either for each
$c\in\Crit'(f)$ we obtain an arbitrarily small new puzzle pieces
containing $c$, or we obtain a new renormalization of $f$. The new
renormalization either has a larger period, or has a smaller degree.
Repeating the argument, if $f$ is not infinitely renormalizable,
then we must stop within finitely steps.

In conclusion, if $f$ is at most finitely renormalizable and has
only hyperbolic periodic points, then we can construct a puzzle such
that for each $c\in\Crit'(f)$ there exists an arbitrarily small
puzzle pieces containing $c$. In the following we fix such a puzzle.

\subsection{The bounds}

Given a recurrent critical point $c$ of $f$ and a nice topological
disk $V\ni c$, we say that a topological disk $U$ is a {\em child}
of $V$ if there exist $c'\in [c]$ and $s\ge 1$ such that $U\ni c'$
and such that $f^s: U\to V$ is a proper map with a unique critical
point, where
$$[c]=\{\zeta\in\Crit'(f):\omega(c)=\omega(\zeta)\ni c, \zeta\}.$$
 We say that $c$ is {\em persistently
recurrent} if for each $c'\in [c]$, each nice topological disk $V\ni
c'$ has only finitely many children. Otherwise, we say that the
recurrent critical point $c$ is {\em reluctantly recurrent}.

Let $\cV$ be a nice topological disk which contains a critical point $c\in\Crit'(f)$. We say that
$\cV$ is {\em essentially $\rho$-nice} if for each return domain $U$ of $\cV$ which
intersects $\text{orb}(c)$, we have $\mod(\cV; U)\ge \rho.$

\begin{lemma}\label{lem:persistent}
For each $c\in \Crit'(f)$ which is persistently recurrent, there
exists an arbitrarily small puzzle piece that contains $c$ and  is
essentially $\rho_1$-nice, where $\rho_1>0$ is a constant
depending only on the degree of $f$.
\end{lemma}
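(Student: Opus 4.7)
The plan is to apply the enhanced nest construction of Kozlovski--Shen--van Strien (\cite{KSS}, Section 6), which is designed precisely to produce shrinking puzzle pieces with bounded geometry around a persistently recurrent critical point.

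First, I would start with an arbitrarily small puzzle piece $P_n(c)$ containing $c$, produced by the puzzle construction of the previous subsection. Using persistent recurrence, each nice topological disk around any $c'\in[c]$ has only finitely many children, so one may define a shrinking sequence of nice puzzle pieces $V_0\supset V_1\supset\cdots\ni c$ by iterating the enhanced nest operations from \cite[Section 6]{KSS}. The essential output of that construction is that, for every $k$ large enough, the first return map to $V_k$ restricted to the critical orbit has a return-time structure whose combinatorics is controlled: any return domain $U$ of $V_k$ that meets $\text{orb}(c)$ is the component of $f^{-s}(V_k)$ through some $c''\in[c]$, and $U$ extends to a pullback $\widehat U$ of a larger nice disk $V_k'$ (obtained by slightly thickening $V_k$ along the puzzle construction) on which $f^s:\widehat U\to V_k'$ has degree bounded by a universal constant $N_1=N_1(d)$.

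Second, from this bounded degree one deduces essential $\rho_1$-niceness. Take $V_k'\supset V_k$ with $\mod(V_k';V_k)\geq \eta_0$ for some universal $\eta_0>0$ (for instance, the next piece in the enhanced nest, or a thickening obtained from the outer puzzle piece of lower depth that contains $V_k$). Then for each return domain $U$ of $V_k$ intersecting $\text{orb}(c)$, pulling back the annulus $V_k'\setminus\overline{V_k}$ through the proper map $f^s:\widehat U\to V_k'$ of degree $\leq N_1$ produces an annulus inside $V_k\setminus\overline{U}$ separating $U$ from $\partial V_k$ of modulus at least $\eta_0/N_1$. Setting $\rho_1=\eta_0/N_1$, which depends only on $d$, yields the claim, and the diameter of $V_k$ tends to $0$ because $V_k\subset V_0$ and $V_0$ may be taken arbitrarily small (or because each $V_{k+1}$ is a strict child of $V_k$).

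The main obstacle is the invocation of the enhanced nest: one must verify that the combinatorial operations of \cite{KSS} can be performed within our puzzle, which requires that the starting puzzle piece be small enough that it is contained in a nice piece on which $f$ exhibits no critical escape outside $[c]$. This follows from the previous subsection, which guarantees arbitrarily small puzzle pieces around every critical point in $J(f)$, together with the fact that for $c$ persistently recurrent the set $[c]$ is closed under taking $\omega$-limits and can be enclosed in finitely many such small pieces simultaneously. Once the setup is in place, the bounded-degree property and the resulting modulus bound are essentially \cite[Lemma 6.3 and its consequences]{KSS}, which we quoted in Lemma~\ref{lem:admdeg}.
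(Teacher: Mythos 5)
Your plan is the same as the paper's in outline — both rely on the enhanced nest construction to produce a shrinking sequence of puzzle pieces around $c$ with uniformly bounded geometry. However, the paper is careful to route everything through the box-mapping construction of~\cite[Lemma 2.2]{KS} (with the domain restricted to first-entry components meeting $[c]$, which keeps the induced box mapping purely persistently recurrent), and then invokes~\cite[Proposition 10.1]{KS} directly, which is the black box containing the whole geometric content. Your proposal tries to reconstruct that content, and the reconstruction has a genuine gap at the crucial point.

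The gap: you need a \emph{universal} lower bound $\mod(V_k';V_k)\geq\eta_0$, with $\eta_0$ depending only on $d$, for an admissible pair $(V_k',V_k)$ around $c$. You offer two sources: ``the next piece in the enhanced nest'' and ``a thickening obtained from the outer puzzle piece of lower depth.'' Neither is elementary. The second simply fails: the outer puzzle piece of lower depth containing $V_k$ has no a priori modulus relation to $V_k$ (the moduli can degenerate). The first is correct, but it is precisely the deep analytic input of the enhanced nest — the complex bounds obtained via the Kahn--Lyubich covering lemma~\cite{KL} — and is not at all a consequence of the combinatorial Lemma~6.3 of~\cite{KSS}, which you cite. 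Lemma~6.3 (your Lemma~\ref{lem:admdeg}) gives only the bounded-degree statement; the modulus lower bound lives in a different and much harder part of~\cite{KSS} (Section 8, via~\cite{KL}) or equivalently in~\cite[Proposition 10.1]{KS}. Attributing ``the resulting modulus bound'' to ``Lemma 6.3 and its consequences'' conflates the easy combinatorial ingredient with the hard analytic one, which is exactly the thing a proof of this proposition cannot afford to do. Additionally, the pullback-annulus argument you describe requires $\widehat U\subset V_k$, i.e., that $(V_k',V_k)$ is genuinely admissible ($f^n(\partial V_k)\cap V_k'=\emptyset$ for $n\geq 1$); that property of the enhanced nest pieces should be stated, since for a generic pair with only a modulus bound the pullback of $V_k'\setminus\overline{V_k}$ need not sit inside $V_k\setminus\overline{U}$.

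So: same route, but the step that makes the result nontrivial is asserted rather than justified, and misattributed. The fix is simply to cite the correct statement — \cite[Proposition 10.1]{KS} (as the paper does), or the corresponding complex-bounds results in~\cite{KSS} which rest on~\cite{KL} — instead of trying to derive the modulus bound from the degree bound alone.
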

\begin{proof} Given a small puzzle piece $W$ that contains $c$, we can
construct a complex box mapping as in Lemma 2.2 of~\cite{KS}, but we
take $V$ to be the union of the components of the domain of the
first entry map to $W$ which intersect $[c]$. Then the box mapping
has only persistently recurrent critical points. Then by the
proposition 10.1 of~\cite{KS}, the $\textbf{I}_n$ constructed in \S
8 of~\cite{KS} is essentially $\rho_1$-nice for some universal
$\rho_1>0$ when $n$ is large enough.
\end{proof}

Let us say that an open set $\cV$ is a {\em puzzle neighborhood} of
a set $A\subset \Crit'(f)$ if $\cV$ is nice and each component of
$\cV$ is a puzzle piece containing exactly one point of $A.$ As
before, If $\cV'\supset \cV$ are two puzzle neighborhoods of $A$
such that $$f^n(\partial \cV_a)\cap \cV_a'=\emptyset,$$ holds for
all $n\ge 1$ and $a\in A$, where $\cV_a$ (resp. $\cV_a'$) denotes
the component of $\cV$ (resp. $\cV'$) which contains $a$, then we
say that $(\cV', \cV)$ is an  admissible pair. We say that $a\in A$
is {\em special} for the pair $(\cV', \cV)$ if
$$f^n(\cV_a)\cap \cV'=\emptyset\mbox{ for all } n\ge 1.$$
Recall that $(\cV', \cV)$ is called $\rho$-bounded if for each $a\in
A$,
$$\mod(\cV_a'; \cV_a)\ge \rho.$$

\begin{lemma} \label{lem:adm}
For any $\rho>0$ there exists $\rho'>0$ such that the
following holds. Let $c\in\Crit'(f)$ and let $(\cV', \cV)$ be an
$\rho$-bounded admissible pair  of puzzle neighborhoods of
$\Back(c)$. Assume that $\cV'$ is sufficiently small. Then for each
$x\in D(\cV)$, we have
$$\mod(\sL_x(\cV');\sL_x(\cV))\ge \rho'.$$
Moreover, if $c$ is special for $(\cV', \cV)$ then $\cV_c$ is
$\rho'$-nice.
\end{lemma}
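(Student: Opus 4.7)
The plan is to treat Lemma~\ref{lem:admdeg} as the engine: it controls, by a universal constant $N_0$, the degree of any pullback of some $\cV'_{c_1}$ along an orbit that first enters $\cV$. Pushing the $\rho$-bound through such a bounded-degree map will give $\rho':=\rho/N_0$ almost immediately, once a containment between pullback components is verified.

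Concretely, for $x\in D(\cV)$ with entry time $s$ into $\cV$ and $c_1\in\Back(c)$ chosen so that $f^s(x)\in\cV_{c_1}$, I will let $U'$ be the component of $f^{-s}(\cV'_{c_1})$ that contains $x$. Provided $\cV'$ is sufficiently small, Lemma~\ref{lem:admdeg} gives $\deg(f^s\colon U'\to\cV'_{c_1})\le N_0$, and since $\sL_x(\cV)$ is the component of $f^{-s}(\cV_{c_1})$ inside $U'$ that contains $x$, the standard modulus inequality for branched coverings yields
$$\mod(U';\sL_x(\cV))\ge\frac{1}{N_0}\mod(\cV'_{c_1};\cV_{c_1})\ge\frac{\rho}{N_0}.$$
The first part of the lemma will then follow with $\rho':=\rho/N_0$, as soon as I verify $U'\subset\sL_x(\cV')$.

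The main obstacle is this containment $U'\subset\sL_x(\cV')$. My plan is to let $s'\le s$ be the entry time of $x$ into $\cV'$, with $f^{s'}(x)\in\cV'_{c_3}$. If $s'=s$, then necessarily $c_3=c_1$ and $U'=\sL_x(\cV')$. Otherwise, I will examine $f^{s'}(U')$, which is connected, contains $f^{s'}(x)\in\cV'_{c_3}$, and is carried by $f^{s-s'}$ into $\cV'_{c_1}\subset\cV'$. If $f^{s'}(U')$ met $\partial\cV'$, then a boundary point of $\cV'$ would map under the positive iterate $f^{s-s'}$ into $\cV'$, contradicting niceness of $\cV'$; hence $f^{s'}(U')\subset\cV'_{c_3}$, and pulling back by $f^{s'}$ yields $U'\subset\sL_x(\cV')$ by connectedness.

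For the moreover clause, the \emph{special} hypothesis makes everything collapse: if $f^n(\cV_c)\cap\cV'=\emptyset$ for all $n\ge 1$, then in particular $f^n(\cV_c)\cap\cV_c=\emptyset$ for every such $n$, so $D(\cV_c)\cap\cV_c=\emptyset$ and $\cV_c$ admits no return domains; consequently $\cV_c$ is vacuously $\rho'$-nice. The substantive content is therefore the pullback estimate of Part 1; the moreover is a bookkeeping consequence of \emph{special} recorded for later use.
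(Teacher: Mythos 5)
Your argument for the modulus estimate $\mod(\sL_x(\cV');\sL_x(\cV))\ge\rho/N_0$ is correct and follows the same route as the paper: pull back through $f^s$, invoke Lemma~\ref{lem:admdeg} for the degree bound $N_0$, and apply the standard modulus inequality for bounded-degree proper maps. Your verification of the containment $U'\subset\sL_x(\cV')$ --- using niceness of $\cV'$ to show that $f^{s'}(U')$ cannot cross $\partial\cV'$ without forcing a boundary point of $\cV'$ to re-enter $\cV'$ under $f^{s-s'}$ --- is correct and in fact supplies a step that the paper asserts without justification. So the first part is fine, and somewhat more detailed than the source.

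The ``moreover'' clause, however, is where you have reasoned yourself into a dead end. Read literally, the paper's definition of \emph{special}, namely $f^n(\cV_a)\cap\cV'=\emptyset$ for all $n\ge 1$, does imply, as you observe, that $\cV_c$ has no return domains at all, so the claimed $\rho'$-niceness becomes vacuous. That should have been a red flag rather than a shortcut: a vacuously nice $\cV_c$ is useless, and the moreover clause is exactly what is invoked at the end of the proof of Proposition~\ref{prop:aprioribounds} (and, via Lemma~\ref{lem:admtran}(iii), in Lemma~\ref{lem:reluctant}) to produce arbitrarily small $\rho_0$-nice pieces around \emph{recurrent} critical points, where return domains certainly exist. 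The definition contains a typo; the intended condition is on the boundary, $f^n(\partial\cV_c)\cap\cV'=\emptyset$ for all $n\ge 1$, a strengthening of the admissibility requirement $f^n(\partial\cV_c)\cap\cV'_c=\emptyset$. With that reading the moreover clause is substantive and needs a genuine argument: for $x\in D(\cV_c)\cap\cV_c$, the entry domain $\sL_x(\cV')$ is connected and contains $x\in\cV_c$; if it escaped $\cV_c$ it would contain a point of $\partial\cV_c$, which at the entry time would land in $\cV'$, contradicting specialness; hence $\sL_x(\cV')\subset\cV_c$. Combined with $\sL_x(\cV_c)\subset\sL_x(\cV)$ and the estimate from the first part, this gives $\mod(\cV_c;\sL_x(\cV_c))\ge\mod(\sL_x(\cV');\sL_x(\cV))\ge\rho/N_0$, which is the content your proof is missing.
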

\begin{proof} Take $x\in D(\cV)$.
Let $s$ be the entry time of $x$ into $\cV$, let $\cU=\sL_x(\cV)$.
Let $a\in \Back(c)$ be such that $f^s(x)\in \cV_a$ and let $\cU'$ be
the component of $f^{-s}(\cV_a')$ which contains $x$. By
Lemma~\ref{lem:admdeg}, the degree of $f^s: \cU'\to \cV'_a$ is
bounded from above by a constant $N_0$. Thus $\mod(\cU'; \cU)\ge
N_0^{-1} \rho$. Since $\cU'\subset \sL_x(\cV')$, $\mod(\sL_x(\cV'),
\sL_x(\cV))\ge \mod(\cU'; \cU)\ge \rho/N_0$.

Now assume that $c$ is special. Then for $x\in D(\cV_c)\cap \cV_c$,
we have $\sL_x(\cV')\subset \cV_c$. Clearly, $\sL_x(\cV_c)\subset
\sL_x(\cV)$. Thus
$$\mod(\cV_c; \sL_x(\cV_c))\ge \mod(\sL_x(\cV'); \sL_x(\cV))\ge \rho/N_0.$$
This proves the last statement.
\end{proof}

\begin{lemma}\label{lem:admtran}
For any $\rho>0$ there exists $\rho'>0$ such that the following
holds. Let $c\in\Crit'(f)$ and let $(V',V)$ be a $\rho$-bounded
admissible pair of puzzle neighborhoods of $c$. Suppose that for
each $x\in \bigcup_{c'\in\Back(c)} \overline{\textrm{orb}(c')}$,
either $\sL_x(V)=\emptyset$ or
$$\mod(V',\sL_x(V'))\ge \rho.$$
If $\diam (V')$ is sufficiently small, then
\begin{enumerate}
\item [(i)] there exists a $\rho'$-bounded admissible pair $(\cV', \cV)$ of puzzle
neighborhoods of $\Back(c)$ such that $\cV'_c=V'$ and $\cV_c=V$ and
such that $c$ is special for this admissible pair;
\item [(ii)] for any $x\in D(V)$, we have $\mod(\sL_x(V'); \sL_x(V))\ge \rho'$;
\item [(ii)] $V$ is $\rho'$-nice.
\end{enumerate}
\end{lemma}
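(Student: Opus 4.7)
The plan is to enlarge the admissible pair $(V',V)$ of neighborhoods of $\{c\}$ to an admissible pair $(\cV',\cV)$ of puzzle neighborhoods of $\Back(c)$ for which $c$ is special, and then to read off the remaining two conclusions from Lemma~\ref{lem:adm}. For each $c'\in\Back(c)\setminus\{c\}$ the forward orbit of $c'$ accumulates at $c\in V$, so the first entry time $t_{c'}$ of $c'$ into $V$ is finite; I set $\cV_c:=V$, $\cV'_c:=V'$, $\cV_{c'}:=\sL_{c'}(V)$, and $\cV'_{c'}:=\comp_{c'}(f^{-t_{c'}}(V'))$. With this choice $\cV_{c'}\subset\cV'_{c'}$ and $f^{t_{c'}}$ restricts to a proper branched cover $\cV'_{c'}\to V'$ carrying $\cV_{c'}$ onto $V$.

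I then verify that $(\cV',\cV)$ is admissible. Niceness of $\cV$ and $\cV'$ is the standard observation for unions of a nice set with its first-entry pull-backs, once one notes that the boundary of each component is disjoint from the reach set $V\cup D(V)$ (respectively $V'\cup D(V')$). The pull-back condition $f^n(\partial\cV_a)\cap\cV'_a=\emptyset$ at $a=c$ is the admissibility of $(V',V)$; at $a=c'\neq c$, any $y=f^n(x)\in\cV'_{c'}$ with $x\in\partial\cV_{c'}$ would satisfy $f^{t_{c'}}(x)\in\partial V$ and $f^{t_{c'}}(y)=f^n(f^{t_{c'}}(x))\in V'$, contradicting $f^n(\partial V)\cap V'=\emptyset$. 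Once admissibility is in place, Lemma~\ref{lem:admdeg} supplies a universal degree bound $N_0$ for $f^{t_{c'}}:\cV'_{c'}\to V'$, so pulling the modulus back gives $\mod(\cV'_{c'};\cV_{c'})\geq \rho/N_0$, and the pair is $(\rho/N_0)$-bounded.

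The specialness of $c$ is obtained by a symmetric pull-back: if $f^n(V)\cap\cV'_{c'}\neq\emptyset$ for some $c'$ and $n\geq 1$, then applying $f^{t_{c'}}$ forces $f^{n+t_{c'}}(V)\cap V'\neq\emptyset$, so admissibility of $(V',V)$ leaves only the alternative $f^{n+t_{c'}}(V)\supset V'$, which is ruled out once $\diam(V')$ is sufficiently small. With $(\cV',\cV)$ an $(\rho/N_0)$-bounded admissible pair in which $c$ is special, Lemma~\ref{lem:adm} delivers the $\rho'$-niceness of $V=\cV_c$, which is (iii), and the estimate $\mod(\sL_x(\cV');\sL_x(\cV))\geq\rho'$ for every $x\in D(\cV)\supset D(V)$. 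Statement (ii) then follows by comparing $\sL_x(V)$ and $\sL_x(V')$ with $\sL_x(\cV)$ and $\sL_x(\cV')$ via the bounded-degree pull-backs of $V$ and $V'$ that come with the construction, absorbing the comparison into a final adjustment of $\rho'$.

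The main technical obstacle will be the special step: one must use the smallness of $V'$, together with the combinatorial control provided by the puzzle pull-backs, to exclude the pathological alternative $f^{n+t_{c'}}(V)\supset V'$ for every admissible $n,c'$. The rest is a clean synthesis of the pull-back and modulus technology already assembled earlier in the section, invoking Lemmas~\ref{lem:admdeg} and~\ref{lem:adm} essentially as black boxes.
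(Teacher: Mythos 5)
Your proposed admissible pair $(\cV',\cV)$ is not the one the paper constructs, and the difference is where the argument breaks down. You take $t_{c'}$ to be the first entry time of $c'$ into the \emph{inner} piece $V$ and set $\cV'_{c'}=\comp_{c'}(f^{-t_{c'}}(V'))$. The paper instead takes $t_i$ to be the first entry time of $c_i$ into the \emph{outer} piece $V'$, so that $\cV'_i=\sL_{c_i}(V')$ is literally a first-entry domain of $V'$; it then distinguishes the case $f^{t_i}(c_i)\in V$ (pull back $V$) from $f^{t_i}(c_i)\notin V$ (pull back the return domain $\cW_i=\sL_{f^{t_i}(c_i)}(V')$). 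Since $t_{c'}\geq t_i$ in general, your $\cV'_{c'}$ is a deeper pull-back than an entry domain of $V'$; in the interval $[t_i,t_{c'})$ the orbit of $c'$ can re-enter $V'\setminus V$ repeatedly. Each such re-entry can contribute local degree at the critical point $c\in V'$, so the degree of $f^{t_{c'}}\colon\cV'_{c'}\to V'$ need not be bounded by any universal constant. Your appeal to Lemma~\ref{lem:admdeg} does not cover this: that lemma bounds the degree for a pull-back along an entry domain $U$ of the union $\cV$, with $s$ the entry time of $U$ into $\cV$; here $\cV_{c'}$ is a component of $\cV$, not an entry domain, and $t_{c'}$ is not the entry time into $\cV$. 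Likewise, the ``standard observation'' that a nice set together with its first-entry pull-backs is nice applies to the paper's entry domains $\cV'_i$, but not to your deeper pull-backs, whose intermediate images $f^j(\cV'_{c'})$ for $t_i\leq j<t_{c'}$ can land inside $V'$ and meet the other components of $\cV'$.

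A further red flag is that your argument never invokes the hypothesis ``for each $x$ in $\bigcup_{c'\in\Back(c)}\overline{\textrm{orb}(c')}$, either $\sL_x(V)=\emptyset$ or $\mod(V',\sL_x(V'))\geq\rho$.'' In the paper's proof this hypothesis is exactly what supplies the modulus bound $\mod(V';\cW_i)\geq\rho$ in the case $f^{t_i}(c_i)\notin V$, which is then pulled back through the bounded-degree entry map to give $\rho'$-boundedness. Your single pull-back $\cV'_{c'}\supset\cV_{c'}$ is intended to make this hypothesis superfluous, but that trade forces you to pull back through a map of uncontrolled degree. The verification of the relation $f^n(\partial\cV_{c'})\cap\cV'_{c'}=\emptyset$ and the broad plan of reducing (ii) and (iii) to Lemma~\ref{lem:adm} once a $\rho'$-bounded admissible pair with $c$ special is in hand are fine and match the paper; the gap is concentrated in the construction of the pair itself, specifically in the degree bound and the niceness of $\cV$ and $\cV'$.
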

\begin{proof}
(i) Let $c_0=c, \cV_0'=V', \cV_0=V$ and let $c_1, c_2, \ldots,
c_{b-1}$ be the critical points in $\Back(c)\setminus\{c\}$. For
each $i=1,2,\ldots, b-1$, let $\cV_i'$ be the entry domain of $V'$
that contains $c_i$ and let $t_i$ be the entry time. If
$f^{t_i}(c_i)\in V$, let $\cV_i$ be the component of $f^{-t_i}(V)$
that contains $c_i$. Otherwise, let $\cW_i$ be the entry domain of
$V'$ that contains $f^{t_i}(c_i)$ and let $\cV_i$ be the component
of $f^{-t_i}(\cW_i)$ that contains $c_i$. Put
$\cV'=\bigcup_{i=0}^{b-1} \cV_i'$ and $\cV=\bigcup_{i=0}^{b-1}
\cV_i$. Clearly, $(\cV', \cV)$ is an admissible pair of puzzle
neighborhoods of $\Back(c)$, and $c$ is a special critical point.
Since the maps $f^{t_i}: \cV_i'\to V'$ has uniformly bounded degree,
the admissible pair $(\cV', \cV)$ is $\rho'$-bounded, where
$\rho'>0$ is a constant.

(ii) By Lemma~\ref{lem:adm} and redefining $\rho'>0$,  we obtain
that for each $x\in D(\cV)$,
$$\mod(\sL_x(\cV'),\sL_x(\cV))\ge \rho'.$$
Since $\cV_i'$ is an entry domain of $\cV_0'$ for each $i=1,2,\ldots, b-1$, we have
$$ \sL_x(\cV')=\sL_x(\cV_0')=\sL_x(V').$$
For $x\in D(V)$,  $\sL_x(\cV)\supset \sL_x(V)$. It follows that
$$\mod(\sL_x(V'); \sL_x(V))\ge\mod(\sL_x(\cV'),\sL_x(\cV))\ge \rho'.$$

(iii) Since $c$ is special for the pair $(\cV', \cV)$, applying the
last statement of Lemma~\ref{lem:adm} proves the result.
\end{proof}
%
%\begin{proof}
%Let $c_0=c, V_0=V, U_0=U$ and let $c_1, c_2, \ldots, c_{b-1}$ be the
%critical points in $\Back(c)\setminus\{c\}$. For each $i=1,2,\ldots,
%b-1$, let $V_i$ be the entry domain of $V$ that contains $c_i$ and
%let $t_i$ be the entry time. If $f^{t_i}(c_i)\in U$, let $U_i$ be
%the component of $f^{-t_i}(U)$ that contains $c_i$. Otherwise, let
%$W_i$ be the entry domain of $V$ that contains $f^{t_i}(c_i)$ and
%let $U_i=\comp_{c_i}f^{-t_i}(W_i)$. Then $(\bigcup_{i=0}^{b-1} V_i,
%\bigcup_{i=0}^{b-1} U_i)$ is an admissible puzzle neighborhood of
%$\Back(c)$, and $c_0$ is a special critical point. Note that
%$\mod(V_i; U_i)\ge \lambda/ d_i$, where $d_i$ is the order of $c_i$.
%By Lemma~\ref{lem:adm}, the lemma follows.
%\end{proof}

\begin{lemma} \label{lem:reluctant}
If $c\in \Crit'(f)$ is reluctantly recurrent. Then for any
$\rho>0$, there exists an arbitrarily small puzzle neighborhood
$V_k$ of $c$ which is $\rho$-nice. In particular, $V_k$ is
essentially $\rho$-nice.
\end{lemma}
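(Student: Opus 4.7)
The plan is to exploit reluctant recurrence to construct admissible pairs $(\cV',\cV)$ of puzzle neighborhoods of $\Back(c)$ with $\mod(\cV_c';\cV_c)$ as large as desired, and then invoke Lemma~\ref{lem:admtran} to conclude that $\cV_c$ is $\rho$-nice.

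First I will extract from reluctant recurrence a nested tower of children: fix $c^*\in [c]$ and a small nice puzzle piece $W\ni c^*$ admitting infinitely many children. Since $[c]$ is finite, by pigeonholing on which critical point each child contains I obtain a strictly nested sequence $U_1\supsetneq U_2\supsetneq\cdots\ni \tilde c$ of children of $W$ (for a fixed $\tilde c\in[c]$), with $f^{s_i}:U_i\to W$ proper of degree at most $\ell_{\max}$ and distinct landing times $s_1<s_2<\cdots$. A Gr\"otzsch super-additivity argument combined with the uniform degree bound will show $\mod(U_1;U_N)\to\infty$ as $N\to\infty$: each image $f^{s_i}(U_{i+1})$ is a proper return domain of $W$, hence is separated from $\partial W$ by a definite annulus furnished by the puzzle construction, and this modulus pulls back through the bounded-degree map $f^{s_i}$ to produce cumulative modulus that diverges.

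Given $\rho>0$, I fix $N$ so that $\mod(U_1;U_N)\geq N_0\rho$, where $N_0$ is the universal degree bound from Lemma~\ref{lem:admdeg}. Since $\tilde c\in\omega(c)$, there exist arbitrarily large $n$ with $f^n(c)\in U_N$; for such $n$, let $V=\Comp_c f^{-n}(U_N)$ and $V'=\Comp_c f^{-n}(U_1)$, both puzzle pieces around $c$, with $\diam V\to 0$ as $n\to\infty$. I then extend $(V',V)$ to an admissible pair $(\cV',\cV)$ of puzzle neighborhoods of $\Back(c)$ by performing analogous first-entry pull-backs at the other critical points in $\Back(c)$. Lemma~\ref{lem:admdeg} bounds the degree of $f^n:V'\to U_1$ by $N_0$, giving $\mod(\cV_c';\cV_c)\geq \rho$, and Lemma~\ref{lem:admtran} converts this into the $\rho'$-niceness of $\cV_c=V$, with $\rho'$ as large as desired by our free choice of $\rho$.

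The main obstacle is the combinatorial bookkeeping required to activate Lemmas~\ref{lem:admdeg} and~\ref{lem:admtran}. For the former, $n$ must be chosen as a first entry time into a carefully designed puzzle neighborhood of $\Back(c^*)$ containing $U_N$, so that no critical orbit in $\Back(c)$ revisits $\cV'$ during times $1,\ldots,n-1$; this is what keeps the pull-back degree bounded. For the latter, the hypothesis $\mod(V';\sL_x(V'))\geq\rho$ for $x$ in the orbit closures of critical points in $\Back(c)$ must be verified via Lemma~\ref{lem:adm} applied to $(\cV',\cV)$. The analytic input is modest (Gr\"otzsch and the Koebe principle, packaged in Lemmas~\ref{lem:admdeg}--\ref{lem:admtran}); the substantive work is the combinatorial orchestration of the pull-backs.
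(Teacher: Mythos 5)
Your strategy—build a nested tower of children, extract divergent modulus via Gr\"otzsch, pull this back to $c$, then invoke Lemma~\ref{lem:admtran}—is reasonable in outline, but it silently assumes the two hardest facts, which the paper obtains from \cite[Lemma 6.5]{KSS} and which are not free.

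First, the claim that each annulus $W\setminus \overline{f^{s_i}(U_{i+1})}$ is ``a definite annulus furnished by the puzzle construction'' is wrong. The puzzle construction alone gives no uniform lower bound on $\mod(W;\,\cdot\,)$ for return domains; that is precisely the quantitative a priori bound one is trying to prove. You need to start from a $\lambda$-nice piece $W$ with $\lambda>0$, and in the reluctantly recurrent case this $\lambda$-niceness is exactly what \cite[Lemma 6.5]{KSS} supplies (it is \emph{not} a consequence of reluctant recurrence per se, nor of the puzzle construction). Without that, $\mod(U_i;U_{i+1})$ can degenerate and the super-additivity argument gives nothing. (Also a minor point: $f^{s_i}(U_{i+1})$ need not be a return domain of $W$—after pigeonholing on $\tilde c$ the landing times $s_i$ may skip returns—though it is still contained in one, so the modulus inequality would survive if $W$ were $\lambda$-nice.)

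Second, the degree bound for $f^n:V'\to U_1$ is not provided by Lemma~\ref{lem:admdeg} for an arbitrary $n$ with $f^n(c)\in U_N$; that lemma controls first-entry pull-backs of an already-built admissible pair, and you acknowledge but do not resolve the bookkeeping needed to arrange that $n$ is such a time. The paper sidesteps both issues by taking from \cite[Lemma 6.5]{KSS} a $\lambda$-nice puzzle piece $V\ni c$ together with an infinite sequence of pull-backs $V_k=\comp_c(f^{-s_k}V)$ of degree uniformly bounded by $N$, then shows directly that $V_k$ is $\lambda_k/N$-nice with $\lambda_k\ge\lambda$, and finally uses Lemma~\ref{lem:admtran} (applied to $(V_k,\sL_c(V_k))$) to drive $\lambda_k\to\infty$. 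In effect you are attempting to re-prove the substantive content of \cite[Lemma 6.5]{KSS} from scratch; either cite it, or supply the missing arguments for the $\lambda$-niceness of $W$ and the uniform degree bound along the chosen pull-back times.
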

\begin{proof} By~\cite[Lemma 6.5]{KSS}, there exists a puzzle piece $V\ni c$, a positive integer $N$ and
a sequence of integers $s_k\to\infty$ with the following properties:
\begin{itemize}
\item $V$ is $\lambda$-nice for some $\lambda>0$;
\item $f^{s_k}(c)\in V$, and letting $V_k=\comp_c(f^{-s_k}V)$, we have
\item $f^{s_k}: V_k\to V$ has degree at most $N$.
\end{itemize}
By replacing $V$ with some pull back of $V$, we may also assume
$$ f^j(V_k)\not\ni c, \mbox{ for all } 1\le j<s_k,$$
which implies that
\begin{equation}\label{eqn:landingtime}
f^j(V_k)\cap V_k=\emptyset \mbox{ for all } 1\le j<s_k.
\end{equation}
Let $$\lambda_k=\inf_{y\in V\cap D(V_k)} \mod(V;\widehat{\sL}_y(V_k)),$$
where $\widehat{\sL}_y(V_k)=V_k$ if $y\in V_k$ and $\widehat{\sL}_y(V_k)=\sL_{y}(V_k)$ otherwise.
Since $\widehat{\sL}_y(V_k)\subset \sL_y(V)$ for each $y\in V\cap D(V_k)$, we have
$\lambda_k\ge \lambda$.

\noindent
{\bf Claim.} Each $V_k$ is $\lambda_k/N$-nice.

To prove this claim, take $x\in V_k\cap D(V_k)$. By (\ref{eqn:landingtime}), the return time of $x$ into $V_k$ is at least $s_k$ and
hence $f^{s_k}(\sL_x(V_k))=\widehat{\sL}_y(V_k)$, where $y=f^{s_k}(x)$.
Therefore
$$\mod(V_k; {\sL_x(V_k)})\geq N^{-1}\mod(V; {\widehat{\sL}_y(V_k)})\ge \lambda_k/N.$$

Letting
$U_k=\sL_c(V_k)$, and applying Lemma~\ref{lem:admtran}, we obtain
that for each $x\in D(U_k)$, $\mod(\sL_x(V_k); \sL_x(U_k))\ge
\rho_1$, where $\rho_1>0$ is a constant. It follows that $\lambda_k\to\infty$.
The lemma follows.
\end{proof}

\begin{proof}[Proof of Proposition~\ref{prop:aprioribounds}]
By Lemmas~\ref{lem:persistent} and~\ref{lem:reluctant}, there exists
a universal constant $\rho_1>0$ such that any $c\in\Crit'(f)$,
there exists an arbitrarily small puzzle piece that contains $c$ and
is essentially $\rho_1$-nice (note that by the definition of
essentially $\rho_1$-nice, the statement is trivial for
non-recurrent $c\in\Crit'(f)$). By Lemma~\ref{lem:admtran} (i), it
follows that there exists an arbitrarily small $\rho_2$-bounded
admissible pair $(\cV', \cV)$ of $[c]$ for which $c$ is special ,
where $\rho_2>0$ is a constant.

Let us define a strictly increasing finite sequence
$\{\Omega_k\}_{k=0}^b$ of subsets of $\Crit'(f)$ as follows. Let
$\Omega_0=\emptyset$. If $\Omega_k$ is defined and
$\Omega_k\not=\Crit'(f)$, then we proceed to define $\Omega_{k+1}$
by taking $c\in \Crit'(f)\setminus \Omega_k$ with $\Back(c)\subset
[c]\cup \Omega_k$ and letting $\Omega_{k+1}=\Omega_k\cup [c]$.
Clearly the procedure stops within $\#\Crit'(f)$ steps, so $b\le \#
\Crit'(f)$ and $\Omega_{b}=\Crit'(f)$. Note that for each $k$, any
$c\in\Omega_k$, $\Back(c)\subset \Omega_k$.

We claim that for each $k=1,2,\ldots, b$ and any $c\in
\Omega_k\setminus \Omega_{k-1}$, there exists an arbitrarily small
$\rho_2$-bounded admissible pair $(V', V)$ of puzzle neighborhoods of
$\Omega_k$ for which $c$ is special. Let us prove this claim by
induction. The case $k=1$ has been proved above, so assume that the
claim holds for $k=k_0-1$, $2\le k_0\le b$. Take $c\in
\Omega_{k+1}\setminus \Omega_k$. By induction hypothesis, for any $n_0\ge 1$ there is a
$\rho_2$-bounded admissible pair of puzzle neighborhoods $(\cW',\cW)$ of $\Omega_k$ such that
the depth of $\cW'_{c'}$ is greater than $n_0$ for each $c'\in\Omega_k$.
Let us choose a $\rho_2$-bounded admissible pair $(\cU', \cU)$
of puzzle neighborhoods of $[c]$ for which $c$
is special, and such that the minimal depth of components of $\cU'$ is greater than the maximal depth of components of $\cW$.
Then
$(\cW'\cup \cU', \cW\cup \cU)$ is a $\rho_2$-bounded admissible
puzzle neighborhood of $\Omega_{k+1}$. Note that
$f^n(\partial \cU)\cap \cW'=\emptyset$ for all $n\ge 0$, for otherwise, we would obtain that $c\in\Back(c')$
for some $c'\in\Omega_k$. Thus $c$ is special for
this admissible pair $(\cW'\cup \cU', \cW\cup \cU)$. This completes the induction step and thus the
proof of the claim.

In particular, for any $c\in\Crit'(f)$, there exists an arbitrarily
small  $\rho_2$-bounded admissible puzzle neighborhoods $(\cV',
\cV)$ of $\Back(c)$ for which $c$ is special. By Lemma~\ref{lem:adm}, $\cV_c$ is a
$\rho_0$-nice puzzle piece for some universal constant $\rho_0>0$.
\end{proof}

\section{Interval maps}\label{sec:interval}
We shall prove corresponding results for a class of interval maps.

Recall that a map~$f : X \to X$ from a compact interval~$X$ of~$\R$
into itself is \textit{of class~$C^3$ with non-flat critical points}
if~$f$ is of class~$C^1$ on~$X$; of class~$C^3$ outside $\Crit(f) :=
\{x\in X \mid Df(x)=0\}$; and for each $c \in \Crit(f)$, there
exists a number $\ell_c>1$ (called {\em the order of~$f$ at~$c$})
and diffeomorphisms $\phi, \psi$ of~$\R$ of class~$C^3$ with
$\phi(c)=\psi(f(c))=0$ such that,
$$|\psi\circ f(x)|=|\phi(x)|^{\ell_c}$$
holds in a neighborhood of~$c$ in~$X$.

For such a map $f$, we define~$J(f)$ to be the complement of the
interior of the attracting basins of periodic attractors and
$\Crit'(f)=\Crit(f)\cap J(f)$. As in \S\ref{sec:intro}, we can
define the properties $BC(r)$ and $LD(K)$ for $C^3$ interval maps
with non-flat critical points. Let
$$\ell_{\max}=\max_{c\in\Crit'(f)} \ell_c.$$

For interval maps, we have

\medskip

\noindent {\bf Theorem A'.} {\em For each $\ell>1$, there exists
$K_0=K_0(\ell)>0$ such that if $f$ is a $C^3$ interval maps with
$\#\Crit'(f)=N$ and with $\ell_{\max}\le \ell$, and if $f$ satisfies
$LD(K_0r)$ for some $r>1$, then $f$ satisfies $BC(r)$.}

\medskip

This is proved in~\cite[Theorem 1]{BRSS}, although the statement
here is slightly more general. We observe that ~\cite[Proposition
1]{BRSS}) remains true under the more general assumption here, if we
require that $f^s(T)$ is contained a small neighborhood of
$\Crit'(f)$ (which is given by \cite[Theorem C]{SV}). The dependence
of constants follows from the proof.

We also have the following

\medskip

\noindent {\bf Theorem B'.} {\em Let $f$ be  a $C^3$ interval maps
with non-flat critical points. There exists a constant $r_0>1$
depending only on $\ell_{\max}$ such that if $f$ satisfies
$BC(Kr_0)$ for some $K>1$, then $f$ satisfies $LD(K)$.}

\medskip
The proof of this theorem follows the same outline as that of
Theorem~\ref{thm:bc2ld}, replacing the Schwarz lemma by the
following real version.

\medskip
\noindent {\bf Real Schwarz Lemma.} {\em Let $f$ be  a $C^3$
interval maps with non-flat critical points. There exists
$\eta=\eta(f)>0$ and a universal constant $\theta\in (0,1)$ such
that if $f^n: U\to V$ is a diffeomorphism between intervals,
$V\subset \tB(c, \eta)$ for some $c\in\Crit'(f)$ and $x\in U$ is
such that $f^n(x)$ is the middle point of $V$, then
$$|Df^n(x)|\ge \theta\frac{|V|}{|U|}.$$
}
\begin{proof} Let $\hV$ be the open interval with $f^n(x)$ as middle
point and with $|\hV|=|V|/2$, and let $\hU=f^{-n}(\hV)\cap U$. By
\cite[Theorem C(2)]{SV}, there exists a constant $K>1$ such that
$$|Df^n(x)|\ge K^{-1} \frac{|\hV|}{|\hU|}\ge
(2K)^{-1}\frac{|V|}{|U|},$$ provided that $\eta$ is sufficiently
small. Thus the lemma holds with $\theta=1/(2K)$.
\end{proof}

\noindent
Department of Mathematics \\
University of Science and Technology of China\\
Hefei 230026, CHINA

\medskip
\noindent hbli@mail.ustc.edu.cn

\bigskip
\noindent
Department of Mathematics \\
National University of Singapore\\
Singapore 117543, SINGAPORE

\medskip
\noindent matsw@nus.edu.sg

\end{document}